\date{\today}
\newtheorem{theorem}{Theorem}
\newtheorem{lemma}[theorem]{Lemma}
\newtheorem{corollary}[theorem]{Corollary}
\newtheorem{algorithm}[theorem]{Algorithm}
\newtheorem{remark}[theorem]{Remark}
\def\dual#1#2{\langle#1\hspace*{.5mm},#2\rangle}
\def\<{\langle\hspace*{-.9mm}\langle}
\def\>{\rangle\hspace*{-.9mm}\rangle}
\def\abs#1{\vert #1 \vert}
\newcommand{\norm}[3][]{#1\|#2#1\|_{#3}}
\def\snorm#1#2{|#1|_{#2}}
\newcommand{\enorm}[2][]{#1|\hspace*{-.3mm}#1|\hspace*{-.3mm}#1|#2#1|\hspace*{-.3mm}#1|\hspace*{-.3mm}#1|}
\def\div{{\rm div}}
\def\osc{{\rm osc}}
\def\ncon{{\rho}}
\def\wilde#1{\widetilde #1}
\def\wat#1{\widehat #1}
\def\nablag{{\nabla_\Gamma}}
\def\scurl{{\bf curl}}
\def\curl{{curl}}
\def\mesh{\TT}
\def\nodes{\NN}
\def\edges{\EE}
\def\el{T}
\def\ed{e}
\def\dist{{\rm{dist}}}
\def\elpatch{{\omega_{\el}}}
\def\slo{\mathcal{V}} 
\def\hyp{\mathcal{W}}
\def\CR{V}
\def\FE{\CR^0}
\def\jump#1{\llbracket#1\rrbracket}
\def\avg#1{\{#1\}}
\def\div{\textrm{div}}
\def\NC{\ensuremath{\perp}}
\def\CO{\ensuremath{0}}
\newcommand{\EE}{\ensuremath{\mathcal{E}}}
\newcommand{\MM}{\ensuremath{\mathcal{M}}}
\newcommand{\NN}{\ensuremath{\mathcal{N}}}
\newcommand{\OO}{\ensuremath{\mathcal{O}}}
\newcommand{\PP}{\ensuremath{\mathcal{P}}}
\newcommand{\TT}{\ensuremath{\mathcal{T}}}
\newcommand{\bN}{\ensuremath{\mathbb{N}}}
\newcommand{\R}{\ensuremath{\mathbb{R}}}
\newcommand{\bH}{\ensuremath{\mathbf{H}}}
\newcommand{\bL}{\ensuremath{\mathbf{L}}}
\newcommand{\V}{\ensuremath{\mathbf{V}}}
\newcommand{\bfc}{\ensuremath{\mathbf{c}}}
\newcommand{\bfm}{\ensuremath{\mathbf{m}}}
\newcommand{\n}{\ensuremath{\mathbf{n}}}
\newcommand{\bft}{\ensuremath{\mathbf{t}}}
\newcommand{\vv}{\ensuremath{\mathbf{v}}}
\newcommand{\x}{\ensuremath{\mathbf{x}}}
\newcommand{\y}{\ensuremath{\mathbf{y}}}
\newcounter{constantsnumber}
\def\setc#1{
  \ifthenelse{\equal{#1}{I}}{C_{\rm I}}{ 
  \ifthenelse{\equal{#1}{stab}}{C_{\rm stab}}{ 
  \ifthenelse{\equal{#1}{ell}}{C_{\rm ell}}{ 
  \ifthenelse{\equal{#1}{sat}}{C_{\rm sat}}{ 
  \ifthenelse{\equal{#1}{eff}}{C_{\rm eff}}{ 
  \ifthenelse{\equal{#1}{rel}}{C_{\rm rel}}{ 
  \ifthenelse{\equal{#1}{reduction}}{C_{\rm red}}{ 
  \ifthenelse{\equal{#1}{norm}}{C_{\rm norm}}{ 
  \ifthenelse{\equal{#1}{poinc}}{C_{\rm edge}}{ 
   \refstepcounter{constantsnumber}
   \label{const#1}C_{\theconstantsnumber}}}}}}}}}}}
\def\definec#1{\refstepcounter{constantsnumber}\label{const#1}}%
\def\c#1{
  \ifthenelse{\equal{#1}{I}}{C_{\rm I}}{ 
  \ifthenelse{\equal{#1}{stab}}{C_{\rm stab}}{ 
  \ifthenelse{\equal{#1}{ell}}{C_{\rm ell}}{ 
  \ifthenelse{\equal{#1}{sat}}{C_{\rm sat}}{ 
  \ifthenelse{\equal{#1}{eff}}{C_{\rm eff}}{ 
  \ifthenelse{\equal{#1}{rel}}{C_{\rm rel}}{ 
  \ifthenelse{\equal{#1}{reduction}}{C_{\rm red}}{ 
  \ifthenelse{\equal{#1}{norm}}{C_{\rm norm}}{ 
  \ifthenelse{\equal{#1}{poinc}}{C_{\rm edge}}{ 
    C_{\ref{const#1}}}}}}}}}}}}
\title[Adaptive CR-BEM]{Adaptive Crouzeix-Raviart Boundary Element Method}
\author[N.~Heuer]{Norbert Heuer}
\author[M.~Karkulik]{Michael Karkulik}
\address{Facultad de Matem\'aticas,
  Pontificia Universidad Cat\'olica de Chile,
  Avenida Vicu\~na Mackenna 4860, Santiago, Chile}
  \email{\{nheuer,\,mkarkulik\}@mat.puc.cl}
  \urladdr{http://www.mat.puc.cl/\{$\sim$nheuer,\,$\sim$mkarkulik\}}
\subjclass[2010]{65N30, 65N38, 65N50, 65R20}
\keywords{boundary element method, adaptive algorithm, nonconforming method, a posteriori error estimation}
\thanks{Financial support by CONICYT through projects
        Anillo ACT1118 (ANANUM) and Fondecyt 1110324, 3140614 is gratefully acknowledged.}
\begin{document}
\maketitle
\begin{abstract}
For the non-conforming Crouzeix-Raviart boundary elements from
[Heuer, Sayas: Crouzeix-Raviart boundary elements, Numer. Math. 112, 2009],
we develop and analyze a posteriori error estimators based on the $h-h/2$ methodology.
We discuss the optimal rate of convergence for uniform mesh refinement,
and present a numerical experiment with singular data where our adaptive algorithm recovers the optimal rate
while uniform mesh refinement is sub-optimal. We also discuss the case of reduced regularity by standard
geometric singularities to conjecture that, in this situation, non-uniformly refined meshes are not
superior to quasi-uniform meshes for Crouzeix-Raviart boundary elements.
\end{abstract}
\section{Introduction}\label{section:introduction}
This is the first paper on a posteriori error estimation and adaptivity for
an \emph{element-wise} non-conforming boundary element method,
namely Crouzeix-Raviart boundary elements analyzed in \cite{hs09}.
Previously, in \cite{DominguezH_PEA}, we presented an error estimate
for a boundary element method with non-conforming domain decomposition.
There, critical for the analysis is that the nonconformity of the method stems from
approximations that are discontinuous only across the interface of sub-domains,
which are assumed to be fixed. In that case, the underlying energy norm of
order $1/2$ of discrete functions has to be localized only with respect to sub-domains.
In this paper, where we consider approximations which are discontinuous across
element edges, such sub-domain oriented arguments do not apply. Instead, we have
to find localization arguments that are uniform under scalings with $h$,
the diameter of elements, which is nontrivial in fractional order Sobolev spaces
of order $\pm 1/2$.

The Crouzeix-Raviart boundary element method is of particular theoretical interest since it serves
to set the mathematical foundation of (locally) non-conforming elements
for the approximation of hypersingular integral equations.
Our main theoretical result is the efficiency and reliability
(based on a saturation assumption) of several a posteriori error estimators.
Our second result is that, for problems with standard geometric singularities,
Crouzeix-Raviart boundary elements with seemingly appropriate mesh refinement
is as good as (and not better than) Crouzeix-Raviart boundary elements on
quasi-uniform meshes. We further discuss this point below.

The a posteriori error estimators in this work are based on the $h-h/2$-strategy.
This strategy is well known from ordinary differential equations~\cite{hnw} and finite element methods~\cite{ao00,bank}.
Recently, it was applied to conforming boundary element methods~\cite{fp08,effp09} as well:
If the discrete space $X_\ell$ is used to approximate the function $\phi$ in the energy norm $\enorm{\cdot}$, we use
the uniformly refined space $\wat X_\ell$ and the corresponding approximations $\Phi_\ell$ and $\wat\Phi_\ell$ to estimate the error via
the heuristics
\begin{align}\label{eq:hh2}
  \eta_\ell := \enorm{\wat\Phi_\ell-\Phi_\ell} \sim \enorm{\phi-\Phi_\ell}.
\end{align}
In a conforming setting, the proof of efficiency of $\eta_\ell$ (i.e, it bounds the error from below) follows readily from orthogonality properties,
while its reliability (i.e., it is an upper bound of the error) is additionally based on a saturation assumption.
In non-conforming methods, orthogonality is available only in a weaker form which contains additional terms,
such that $h-h/2$-based estimators are more involved than in a conforming setting.

As mentioned before, additional difficulties arise in boundary element methods due to the fact that
the underlying energy norm $\enorm{\cdot}$ is equivalent to a fractional order Sobolev norm.
These norms typically cannot be split into local error indicators. We use ideas from~\cite{fp08} to localize via weighted integer order Sobolev norms.

We are particularly interested in problems with singularities which are inherent
to problems on polyhedral surfaces where corner and corner-edge singularities
appear. In the extreme case of the hypersingular integral equation on a plane open
surface $\Gamma$ (which is our model problem), its solution is not in $H^1(\Gamma)$
since edge singularities behave like the square root of the distance to the boundary curve
\cite{Stephan_87_BIE}. The energy norm of this problem defines a Sobolev space of
order $1/2$, so that low-order conforming methods with quasi-uniform meshes
have approximation orders equal to $\OO(h^{1/2})$ ($h$ being the mesh size),
cf. \cite{BespalovH_08_hpB}. In \cite{hs09} the authors have
shown that this is also true for Crouzeix-Raviart boundary elements. Now, for an
adaptive method or a method with appropriate mesh refinement towards the singularities,
one expects to recover the optimal rate $\OO(h)$ of a low-order method.
{\em Surprisingly, this appears to be false in the case of
Crouzeix-Raviart boundary elements.}

We conjecture that $\OO(h^{1/2})$ (or $\OO(N^{-1/4})$ with $N$ being the number of unknowns)
is the optimal rate for our model problem even when using non-uniformly refined meshes.
We base our conjecture on two observations.
Standard error estimation of non-conforming methods, based on the second Strang lemma,
comprise a best-approximation term and a nonconformity term.
The best-approximation term has indeed the optimal order of a conforming method
but we observe that
the standard upper bound of the nonconformity term is of the order $\OO(N^{-1/4})$ and not better.
This surprising result can be explained by the fact that
the appearing Lagrangian multipliers on the edges of the elements
(needed for the jump condition of the Crouzeix-Raviart basis functions) are approximated
in a Sobolev space of order only $1/2$ less than the unknown function. Taking
into account that the total relative measure of the edges increases with mesh refinement
and that the Lagrangian multipliers are approximated only by constants, this
explains the limited convergence order of the whole method.

The second observation stems from numerical experiments with Crouzeix-Raviart boundary elements
using meshes which are optimal for \emph{conforming} methods:
\begin{itemize}
  \item We consider uniform meshes for the non-conforming approximation of
        a solution which is an element of the coarsest conforming space
        (i.e., a conforming method would compute the exact solution).
  \item We consider algebraically graded meshes which are optimal for conforming
        approximations in the sense that they guarantee an approximation order
        $\OO(N^{-1/2})$ for inherent singularities.
\end{itemize}
Both types of meshes show the reduced order of convergence $\OO(N^{-1/4})$, and the same reduced order is observed for our adaptive procedure.

Based on this conjecture, we conclude that, for Crouzeix-Raviart boundary elements,
quasi-uniform meshes are optimal to approximate standard geometric
singularities where the solution is almost in $H^1(\Gamma)$. There is no need for
adaptive mesh refinement. In this case, the only use of a posteriori error estimation
is the very error estimation.

There are cases, however, where given data are singular so that solutions have
singular behavior which is stronger than that due to geometric irregularities.
In these cases an adaptive Crouzeix-Raviart boundary element method can be used
to recover the optimal rate $\OO(N^{-1/4})$ which cannot be achieved with quasi-uniform meshes
in this situation. Our numerical experiments report on such a case where the exact solution
is strictly less regular than $H^1(\Gamma)$.\\

As model problem, we use the Laplacian exterior to a polyhedral domain or an open polyhedral surface. The Neumann problem for such a problem
can be written equivalently with the hypersingular integral operator $\hyp$,
\begin{align}\label{eq:hyp}
  \hyp \phi (\x) := - \frac{1}{4 \pi} \partial_{\n(\x)}\int_\Gamma \partial_{\n(\y)}\left( \frac{1}{\abs{\x-\y}} \right) \phi(\y)\,d\Gamma(\y) = f(\x),
\end{align}
where $\Gamma$ is the open or closed surface and $f$ is a given function. The link to the Neumann problem for the exterior Laplacian is given by
the special choice $f = (1/2-K')v$, with $v$ the Neumann datum and $K'$ the adjoint
of the double-layer operator.
Although the operator $\hyp$ can act on discontinuous functions, the hypersingular integral equation~\eqref{eq:hyp} is not well-posed in such a case.
However, continuity requirements can be relaxed by using the relation
$\hyp = \curl_\Gamma \slo \scurl_\Gamma$ with single layer operator $\slo$ and
certain surface differential operators $\curl_\Gamma$ and $\scurl_\Gamma$, see~\cite{Nedelec_82_IEN,ghh:09}.
This identity allows us to use the space $V$ of Crouzeix-Raviart elements to approximate
the exact solution $\phi$ of~\eqref{eq:hyp} in a non-conforming way.
The associated energy norm will then be $\enorm{\cdot} = \norm{\scurl \cdot}{H^{-1/2}(\Gamma)}$, see Section~\ref{section:crbem}.

The reliability and efficiency of $h-h/2$ error estimators for conforming methods follows readily from the Galerkin orthogonality
\begin{align}\label{eq:orth}
  \enorm{\phi-\Phi_\ell}^2 = \enorm{\phi-\Phi_\ell}^2 + \enorm{\wat\Phi_\ell - \Phi_\ell}^2,
\end{align}
where reliability additionally needs the saturation assumption
\begin{align*}
  \enorm{\phi-\wat\Phi_\ell} \leq \c{sat} \enorm{\phi-\Phi_\ell}, \quad \text{ with } 0<\c{sat}<1 \text{ for all } \ell\in\mathbb{N}.
\end{align*}
In a non-conforming setting, the orthogonality~\eqref{eq:orth} does not hold true any longer. However, there is a substitute given by an estimate
which involves additional terms of the form
$\enorm{\Phi_\ell - \Phi_\ell^\CO}$, with $\Phi_\ell^\CO$ being a conforming approximation of $\phi$, see Section~\ref{section:conform}.

A term of the form $\enorm{\Phi_\ell - \Phi_\ell^\CO}$ will be called \textit{nonconformity error}. Although it is computable, it is evident that the
computation of $\Phi_\ell^\CO$ has to be avoided. Hence, we will show that the nonconformity error can be bounded by inter-element jumps of $\Phi_\ell$, see Corollary~\ref{cor:ncon}.
To that end, we will analyze the properties of quasi-interpolation operators in the space $H^{-1/2}(\Gamma)$ in Section~\ref{section:interpolation}.

In Section~\ref{section:aposteriori}, we show that the a posteriori error estimator $\eta_\ell$ from~\eqref{eq:hh2} is reliable and efficient up to the nonconformity error, which
can then be exchanged with the inter-element jumps of $\Phi_\ell$. As already mentioned, $\eta_\ell$ is not localized, and we will use ideas from~\cite{fp08} to introduce three
additional error estimators for that purpose. Two of them are localized, see Section~\ref{section:local:est}, and can be used in a standard adaptive algorithm,
see Algorithm~\ref{algorithm} below.
We show in Section~\ref{section:aposteriori} that all error estimators are efficient and, under the saturation assumption, also reliable, up to inter-element jumps.
Finally, Section~\ref{section:numerics} presents numerical results.

\section{Crouzeix-Raviart boundary elements}\label{section:model}
\subsection{Notation and model problem}\label{section:notation}
We consider an open, plane, polygonal screen $\Gamma\subset\R^2$, embedded in $\R^3$, with normal $\n(\y)$ at $\y\in\Gamma$ pointing upwards. Restricting ourselves to a plane screen simplifies
the presentation. However, associated solutions exhibit the strongest possible edge singularities that,
at least for conforming methods, require nonuniform meshes in order to guarantee efficiency of approximation. On $\Gamma$, we use the standard
spaces $L_2(\Gamma)$ and $H^1(\Gamma)$, and as usual, $H^1_0(\Gamma)\subset H^1(\Gamma)$ consists of functions
that vanish on the boundary $\partial\Gamma$. The space $H^1_0(\Gamma)$ is equipped with the $H^1(\Gamma)$
(semi-)norm $\abs{\cdot}_{H^1(\Gamma)} := \norm{\nablag\cdot}{L_2(\Gamma)}$ where $\nablag$ denotes
the surface gradient.
We define intermediate spaces by the $K$-method of interpolation (see, e.g.,~\cite{triebel}), that is,
\begin{align*}
  H^s(\Gamma) = \left[ L_2(\Gamma), H^1(\Gamma) \right]_{s} \quad\text{ and } \quad\wilde H^s(\Gamma) = \left[ L_2(\Gamma), H^1_0(\Gamma) \right]_{s} \quad\text{ for } 0 < s < 1.
\end{align*}
Sobolev spaces with negative index are defined via duality with respect to the extended $L_2(\Gamma)$ inner product $\dual{\cdot}{\cdot}$,
\begin{align*}
  H^s(\Gamma) := \wilde H^{-s}(\Gamma)' \quad\text{ and } \quad \wilde H^s(\Gamma) := H^{-s}(\Gamma)' \quad\text{ for } -1\leq s < 0.
\end{align*}
Space of vector valued functions will be denoted by bold-face letters, i.e. $\bL_2(\Gamma)$ or $\bH^{1/2}(\Gamma)$, meaning
that every component is an element of the respective space.
We will use tangential differential operators.
For sufficiently smooth functions $\phi$ on $\Gamma$, we define the tangential curl operator $\scurl$ by
\begin{align*}
  \scurl \phi := \left( \partial_y \phi, -\partial_x \phi, 0 \right).
\end{align*}
Drawing upon the results from~\cite{bcs02}, it is shown in~\cite[Lemma 2.2]{ghh:09} that
the operator $\scurl$ can be extended to a continuous operator,
mapping $\wilde H^{1/2}(\Gamma)$ to
\begin{align*}
  \wilde\bH^{-1/2}(\Gamma) := \big\{ \psi \in \big( \wilde H^{-1/2}(\Gamma) \big)^3 \mid \psi\cdot\n = 0 \big\}.
\end{align*}
Now our model problem is as follows.
For a given $f\in H^{-1/2}(\Gamma)$, {\em find $\phi\in \wilde H^{1/2}(\Gamma)$ such that}
\begin{align}\label{eq:hypsing}
  \dual{\hyp \phi}{\psi} = \dual{f}{\psi} \quad \text{ for all } \psi\in\wilde H^{1/2}(\Gamma).
\end{align}
Here, $\hyp$ is the hypersingular integral operator from~\eqref{eq:hyp}. It is well known that this problem
has a unique solution, cf. \cite{Stephan_87_BIE}. Recall the relation
$\hyp = \curl_\Gamma \slo \scurl_\Gamma$ with single layer operator $\slo$,
\begin{align*}
  \slo u (\x) := \frac{1}{4 \pi} \int_\Gamma \frac{1}{\abs{\x-\y}} u(\y)\,d\Gamma(\y).
\end{align*}
Performing integration by parts one finds that an equivalent formulation of~\eqref{eq:hypsing} is given by
\begin{align}\label{eq:symm}
  \dual{\slo \scurl \phi}{\scurl\psi} = \dual{f}{\psi} \quad \text{ for all } \psi\in\wilde H^{1/2}(\Gamma),
\end{align}
see~\cite{Nedelec_82_IEN} and \cite[Lemma 2.3]{ghh:09}.
Note that $\slo$ in~\eqref{eq:symm} is considered to transfer \textit{vectorial} densities into \textit{vectorial} potentials, i.e., $\slo$ acts component-wise.
\subsection{Meshes and local mesh-refinement}\label{section:meshes}
A triangulation $\mesh$ of $\Gamma$ consists of compact 2-dimensional simplices (i.e., triangles) $\el$ such that
$\bigcup_{\el\in\mesh}\el = \overline\Gamma$. We do not allow hanging nodes. The volume area $\abs{\el}$ of every
element defines the local mesh-width $h_\mesh\in L_\infty(\Gamma)$ by $h_\mesh|_\el := h_\mesh(\el) := \abs{\el}^{1/2}$.
We define $\edges_\mesh$ to be the set of all edges $\ed$ of the triangulation $\mesh$, and $\nodes_\mesh$ as
the set of all nodes $z$ of the triangulation which are not on the boundary $\partial\Gamma$.
We will need different kinds of patches. For a node $z\in\nodes_\mesh$, we denote by $\omega_z$ the node patch as the
set of all elements $\el\in\mesh$ sharing $z$. Likewise, we define an edge patch $\omega_\ed$. For an element $\el\in\mesh$,
the patch $\omega_\el$ is the set of all elements sharing a node with $\el$.

Starting from an initial triangulation $\mesh_0$ of $\Gamma$, we will generate a sequence
of meshes $\mesh_\ell$ for $\ell\in\bN$ via so-called \textit{newest vertex bisection} (NVB).
For a brief overview, we refer to Figure~\ref{fig:nvb}, and for a precise definition,
we refer the reader to~\cite{verfuerth,kpp}.
We denote by $\overline\el$ a fixed reference element, and by $\overline u$ the pull-back of a function $u$ defined on $\el$,
i.e., if $F_\el:\overline\el\rightarrow\el$ is the affine element map, $\overline u := u \circ F_\el$.
An important property of the NVB refinement strategy is that one can not only map elements $\el$ to fixed reference domains,
but also patches.
This means that there is a finite set of fixed reference patches and affine maps such that any node-, element-, or
edge patch is the affine image of such a reference patch.
In particular, there are only finitely many constants involved in scaling argument on patches, and hence,
one may use patches in scaling arguments.
For a mesh $\mesh$, we denote by $\wat\mesh$ the uniformly refined mesh, i.e., all edges in $\mesh$ are bisected.

\begin{figure}[t]
\centering
\psfrag{T0}{}
\psfrag{T1}{}
\psfrag{T2}{}
\psfrag{T3}{}
\psfrag{T4}{}
\psfrag{T12}{}
\psfrag{T34}{}
\includegraphics[width=35mm]{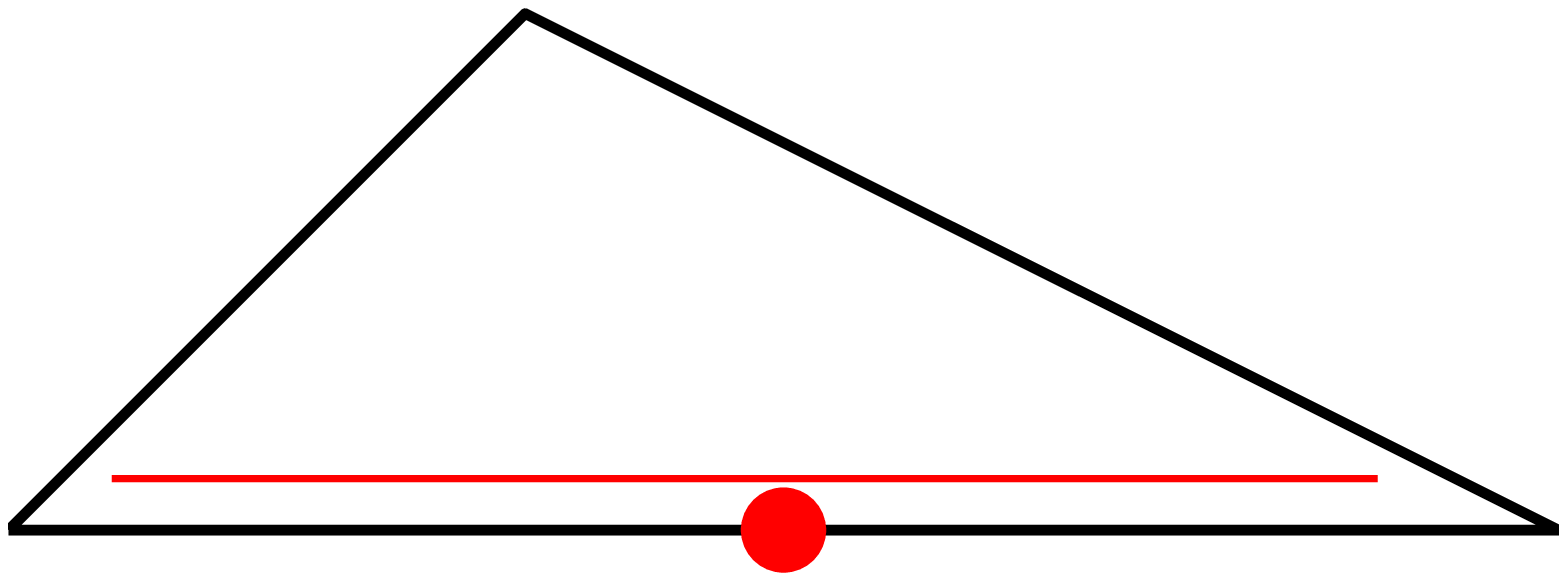} \quad
\includegraphics[width=35mm]{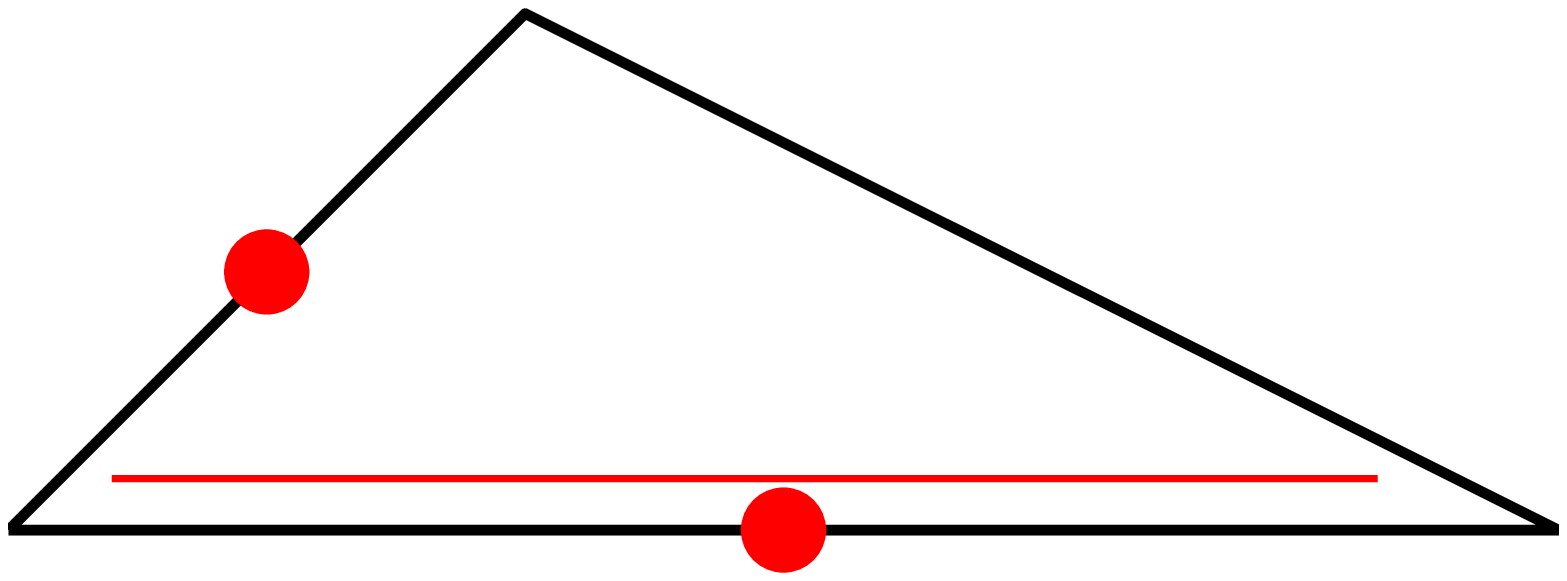} \quad
\includegraphics[width=35mm]{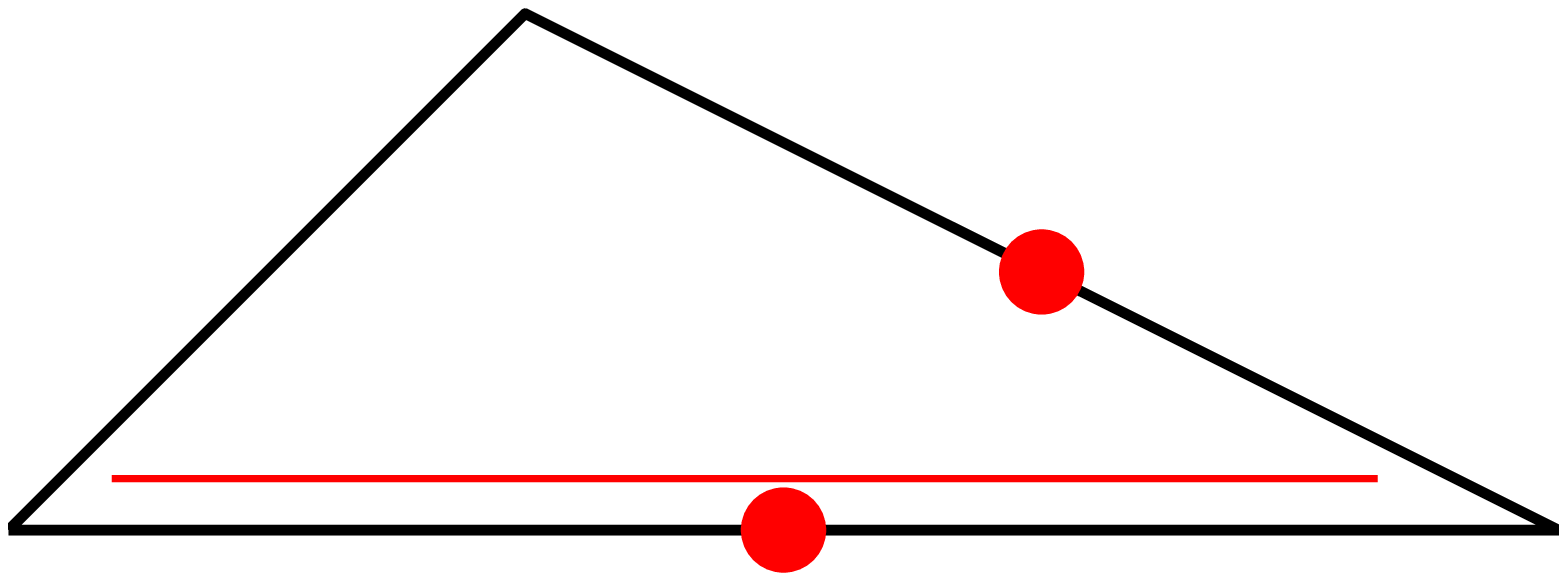} \quad
\includegraphics[width=35mm]{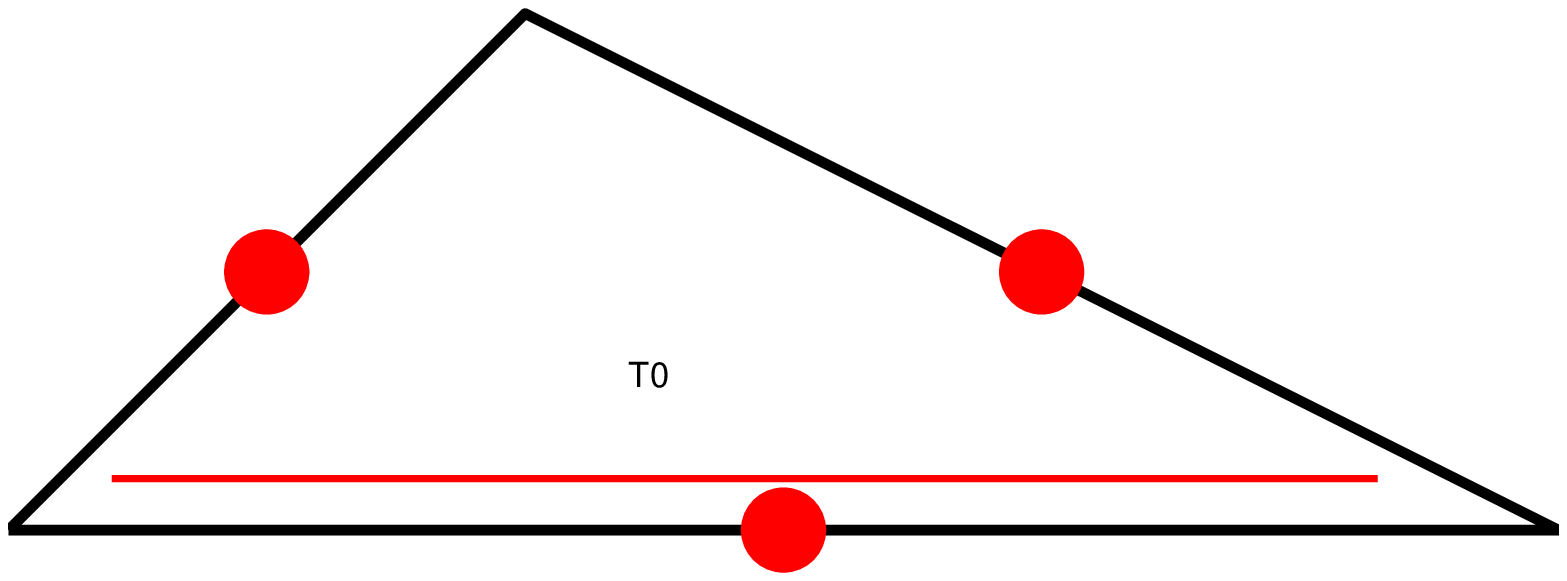} \\
\includegraphics[width=35mm]{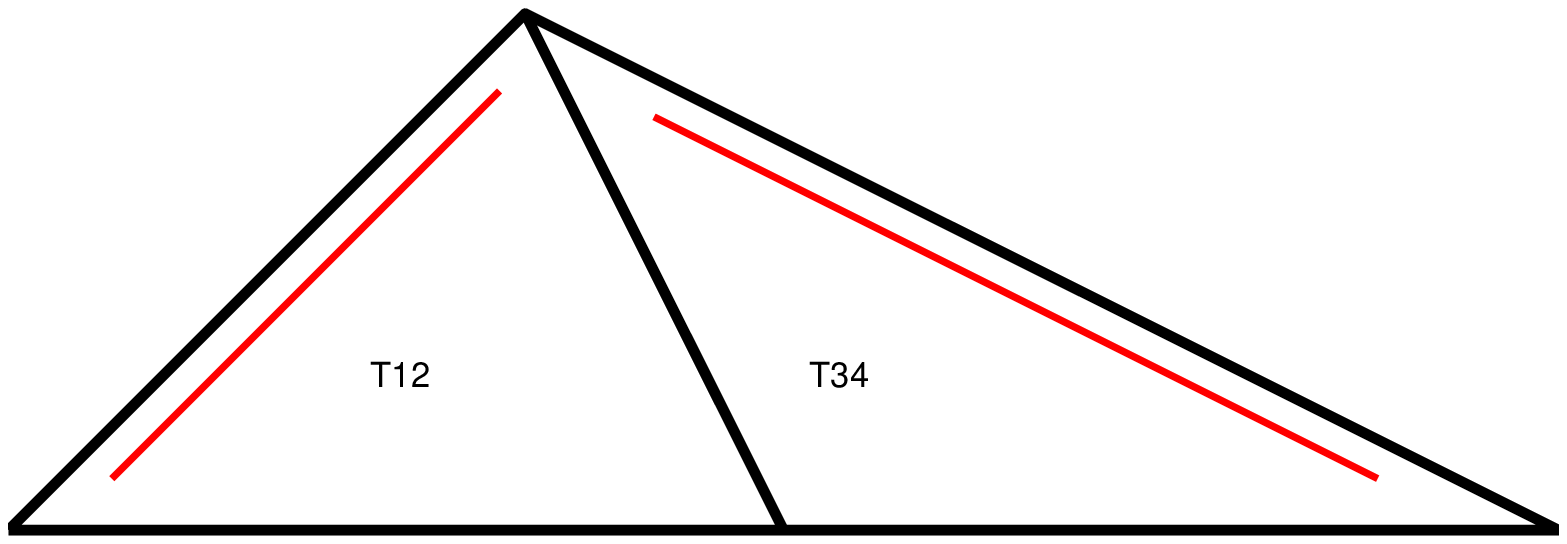} \quad
\includegraphics[width=35mm]{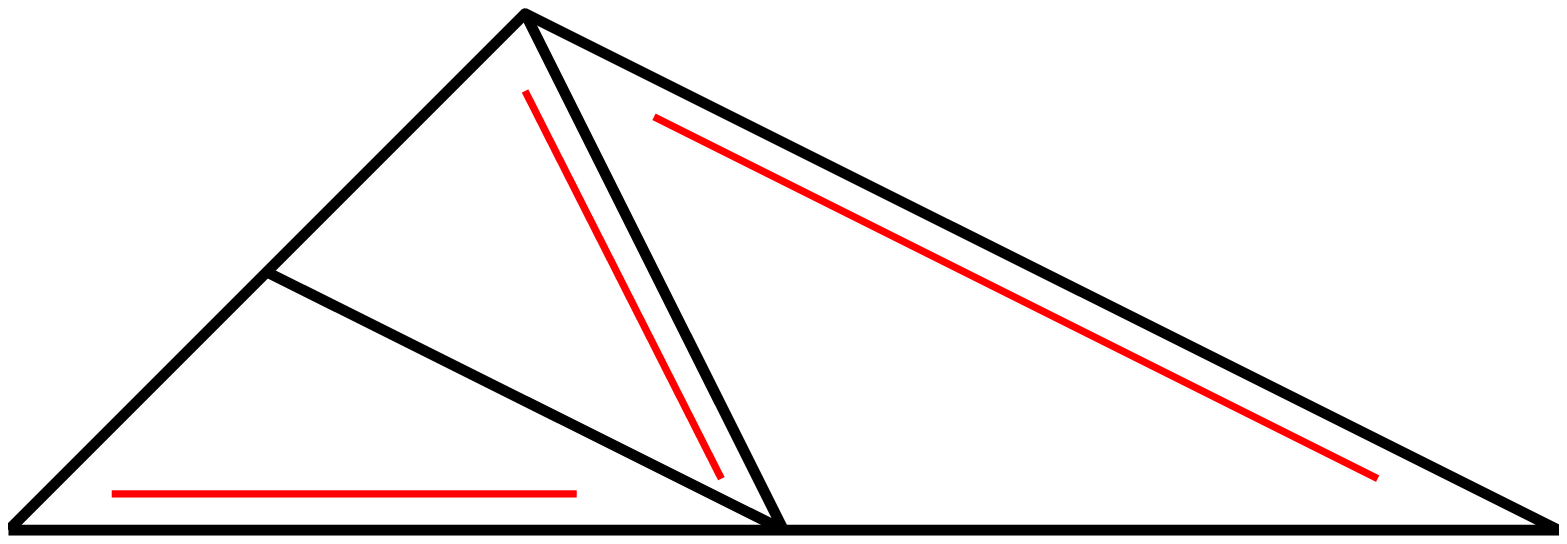}\quad
\includegraphics[width=35mm]{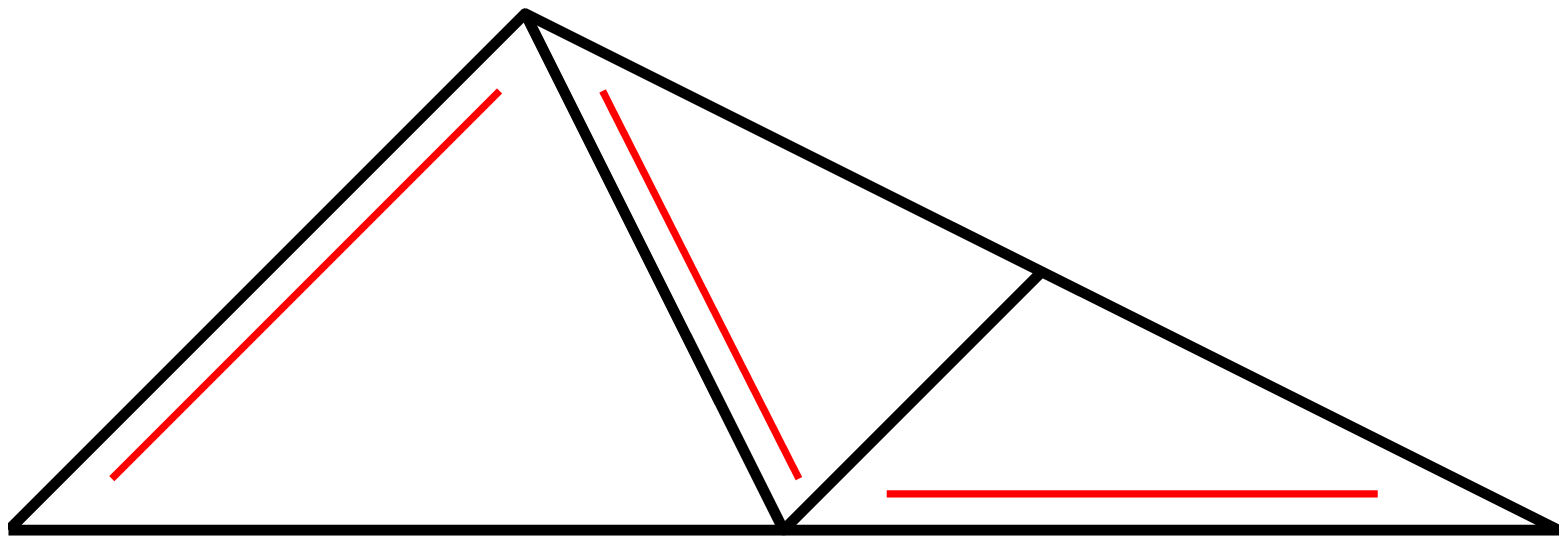}\quad
\includegraphics[width=35mm]{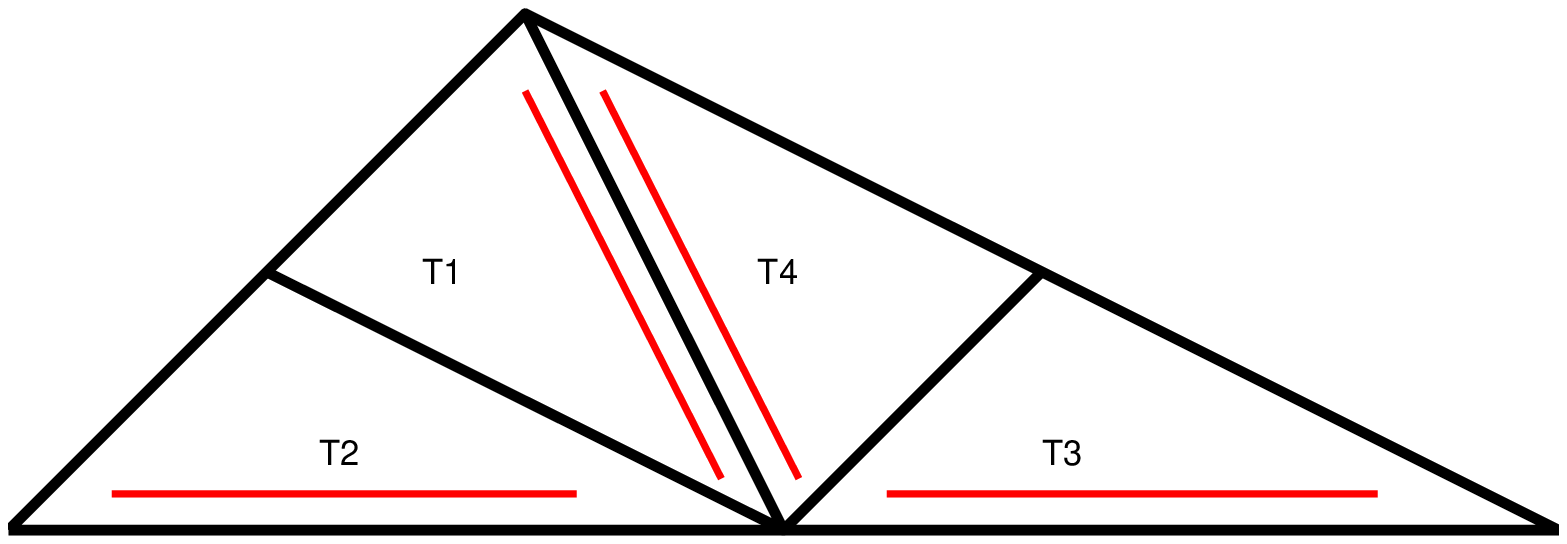}
\caption{
For each triangle $\el\in\mesh_\ell$, there is one fixed \emph{reference edge},
indicated by the double line (left, top). Refinement of $\el$ is done by bisecting
the reference edge, where its midpoint becomes a new node. The reference
edges of the son triangles $\el'\in\mesh_{\ell+1}$ are opposite to this newest vertex (left, bottom).
To avoid hanging nodes, one proceeds as follows:
We assume that certain edges of $\el$, but at least the reference edge,
are marked for refinement (top).
Using iterated newest vertex bisection, the element is then split into
2, 3, or 4 son triangles (bottom).
If all elements are refined by three bisections (right, bottom), we obtain the
so-called uniform bisec(3)-refinement which is denoted by $\widehat\mesh_\ell$.}
\label{fig:nvb}
\end{figure}

For a triangle $\el\in\mesh$, we denote by $\n_\el$ the normal vector on $\partial\el$ pointing outwards of $\el$.
For an inner edge $\ed\in\edges_\mesh$, i.e., $\ed\subset\Gamma$, we denote by $\el_\ed^+$ and $\el_\ed^-$ the two elements of $\mesh$ sharing $\ed$,
and we define $\n^+ := \n_{\el_\ed^+}$ and $\n^-:=\n_{\el_\ed^-}.$
For smooth enough
functions $\phi:\Gamma\rightarrow\R$ and $\vv:\Gamma\rightarrow\R^2$ we define the jumps $\jump{\cdot}$ and averages $\avg{\cdot}$ of the traces $\phi^+$, $\phi^-$, $\vv^+$, and $\vv^-$
by
\begin{align*}
  \begin{array}{ll}
    \avg{\phi}|_\ed := \frac{1}{2} (\phi^+ + \phi^-), & \avg{\vv}|_\ed := \frac{1}{2}(\vv^+ + \vv^-), \\
    \jump{\phi}|_\ed := \phi^+\n^+ + \phi^-\n^-, & \jump{\vv}|_\ed := \vv^+\n^+ + \vv^-\n^-.
  \end{array}
\end{align*}
If we equip a mesh with an index, e.g., $\mesh_\ell$, then we will use the index $(\cdot)_\ell$ instead of $(\cdot)_{\mesh_\ell}$, i.e., we write, e.g., $h_\ell$ instead of $h_{\mesh_\ell}$,
and the same abbreviation will be used for sets of edges or nodes, e.g., $\edges_\ell$ or $\nodes_\ell$.
\subsection{Crouzeix-Raviart boundary elements}\label{section:crbem}
For a given mesh $\mesh$, $\PP^1(\mesh)$ is the space of piecewise linear functions.
By $\FE = \FE_\mesh$, we denote the space of lowest-order continuous boundary elements, i.e.,
\begin{align*}
  \FE := \PP^1(\mesh)\cap H^1_0(\Gamma),
\end{align*}
and $\CR = \CR_\mesh$ is the space of Crouzeix-Raviart boundary elements, i.e.,
\begin{align*}
  \CR := \left\{ \Phi \in \PP^1(\mesh)\:\: \vline \:\:
    \begin{aligned}
      \Phi \text{ is continuous in } \bfm_\ed\: &\forall\ed\in\edges_\mesh \text{ with }\ed\nsubseteq\partial\Gamma,\\
      \Phi(\bfm_\ed) = 0\: &\forall\ed\in\edges_\mesh \text{ with } \ed\subset\Gamma
    \end{aligned}
  \right\},
\end{align*}
where $\bfm_\ed$ is the midpoint of $\ed\in\edges_\mesh$.
For $\scurl_\mesh:\PP^1(\mesh)\rightarrow \bL_2(\Gamma)$ being the $\mesh$-piecewise tangential curl operator,
a norm in $\CR$ is given by
\begin{align*}
  \enorm{\cdot}_{\mesh} := \norm{\scurl_\mesh\cdot}{\wilde\bH^{-1/2}(\Gamma)}.
\end{align*}
In the following we consider the bilinear form
\begin{align*}
  a_\mesh(\Phi,\Psi):=\dual{\slo \scurl_\mesh \Phi}{\scurl_\mesh \Psi}.
\end{align*}
By the properties of the single-layer operator $\slo$, cf.~\cite{mclean}, $a_\mesh$ is symmetric and there
is a constant $\setc{norm}>1$, independent of $\mesh$ and $\Phi\in\CR$, such that
\begin{align*}
  \c{norm}^{-2} \enorm{\Phi}_{\mesh}^2 \leq a_\mesh(\Phi,\Phi) \leq \c{norm}^2 \enorm{\Phi}_{\mesh}^2.
\end{align*}
This makes $a_\mesh$ an inner product in $V$, which is therefore a Hilbert space.
Assuming additional regularity $f\in H^{-1/2+\varepsilon}(\Gamma)$ with $\varepsilon>0$, then
\begin{align}\label{eq:rhs}
  \dual{f}{\Psi} \leq \norm{f}{H^{-1/2+\varepsilon}(\Gamma)} \norm{\Psi}{H^{1/2-\varepsilon}(\Gamma)}
                 \leq C_\mesh \enorm{\Psi}_\mesh \quad \text{ for all } \Psi\in\CR.
\end{align}
Here we used the equivalence of norms in the finite-dimensional space $\CR$,
such that the number $C_\mesh>0$ depends on $\mesh$.
\definec{stab}\definec{ell}
By the Lax-Milgram lemma there exists a unique Galerkin solution $\Phi\in\CR$ of
\begin{align}\label{eq:galerkin}
  \dual{\slo\scurl_\mesh\Phi}{\scurl_\mesh\Psi} = \dual{f}{\Psi} \quad \text{ for all }\Psi\in\CR.
\end{align}
The unique solvability of~\eqref{eq:galerkin} was already addressed in~\cite{hs09} and studied via
an equivalent saddle-point problem.
We emphasize that the constant $C_\mesh$ in~\eqref{eq:rhs} depends on $\CR$, but is not
used in our analysis.
In the statements and arguments below, our notations will mostly omit the explicit dependence on
$\mesh$ by writing, e.g., $\enorm{\cdot}$, assuming that this is the norm related to the
finest mesh which occurs in the norms' argument.
\subsection{Uniform refinement: consistency error and optimal convergence}\label{section:uniform}
We briefly discuss existing results for the Crouzeix-Raviart BEM of Section~\ref{section:crbem} based on a sequence of uniformly refined meshes $(\mesh_\ell)_{\ell\in\mathbb{N}_0}$.
According to~\cite[Theorem 2]{hs09}, it holds that
\begin{align}\label{eq:uniform}
  \enorm{\phi-\Phi_\ell} \lesssim h_\ell^{1/2}\norm{\phi}{H^1(\Gamma)},
\end{align}
if $f\in L_2(\Gamma)$ and $(\mesh_\ell)_{\ell\in\mathbb{N}_0}$ is a uniform sequence of meshes with mesh width $h_\ell$. The proof of~\eqref{eq:uniform}
uses, as is customary in the analysis of non-conforming methods, the Lemma of Berger, Scott, and Strang, see, e.g.,~\cite{berger:scott:strang}.
With a view to the well-known approximation results of conforming method, it suffices to bound the so-called consistency error.
In~\cite[Prop. 5]{hs09}, it is shown that this can be done by
\begin{align*}
  \sup_{\Psi_\ell \in V_{\ell}} \frac{a(\phi-\Phi_\ell,\Psi_\ell)}{\norm{\scurl_\Gamma \Psi_\ell}{\wilde\bH^{-1/2}(\Gamma)}}
  \lesssim \inf_{\mu_\ell\in \PP^0(\edges_\ell)}\left[ \sum_{\ed\in\edges_\ell} \norm{\bft_\ed\cdot\slo\scurl\phi - \mu_\ell}{L_2(\ed)}^2 \right]^{1/2}
\end{align*}
and that the right-hand side converges like $\OO(h_\ell^{1/2})$, see~\cite[Lemma 6]{hs09}. However, this bound for the convergence rate of the right-hand side
is optimal. Indeed, for $v\in\PP^1(\Gamma)\setminus\PP^0(\Gamma)$ it holds that
\begin{align*}
  \inf_{\mu_\ell\in \PP^0(\edges_\ell)}\left[ \sum_{\ed\in\edges_\ell} \norm{v - \mu_\ell}{L_2(\ed)}^2 \right]^{1/2} \simeq \OO(h_\ell^{1/2}),
\end{align*}
which can be seen by a direct calculation.
Therefore, we are led to conjecture that the optimal order of convergence is $\OO(h^{1/2})$. An easy numerical example supports this conjecture. We choose
$\Gamma = \left[ 0,1 \right]^2$ and divide it along the diagonals and the midpoints of its sides, such that we obtain a mesh $\mesh_0$ of $8$ triangles.
We choose the exact solution $\phi\in\FE_{0}$ that vanishes on $\partial\Gamma$ and has the value $1$ in the center of $\Gamma$. In Figure~\ref{fig:unif_exact}, we
visualize the outcome of the corresponding Crouzeix-Raviart BEM based on a uniform mesh refinement.
We have not yet defined the shown quantities,
but what is important here is that $\Phi_\ell\in\CR_\ell$ denotes the Crouzeix-Raviart solution on the mesh $\mesh_\ell$, whereas $\Phi^0_\ell\in\FE_\ell$ denotes the conforming
solution. According to the definition of $\phi$, we have $\Phi^0_\ell=\phi$, and hence, according to~\eqref{eq:uniform},
\begin{align*}
  \enorm{\Phi_\ell-\Phi^0_\ell}=\enorm{\phi-\Phi_\ell} = \OO(h_\ell^{1/2}).
\end{align*}
One would expect an increased order of $\OO(h_\ell^{1-\varepsilon})$ for every $\varepsilon>0$, as $\phi\in\wilde H^{1/2}(\Gamma)\cap H^{3/2-\varepsilon}(\Gamma)$.
However, as Figure~\ref{fig:unif_exact} reveals, this increased rate is not achieved - we still observe $\OO(h_\ell^{1/2})$, which therefore seems to be the optimal rate
that can be obtained.

\begin{figure}[t]
\centering
\psfrag{muTilde}[cr][cr]{$\widetilde\mu_\ell^2$}
\psfrag{eta}[cr][cr]{$\eta_\ell^2$}
\psfrag{jumps}[cr][cr]{$\rho_\ell^2 + \widehat\rho_\ell^2$}
\psfrag{conforming error}[cr][cr]{$\enorm{\Phi_\ell - \Phi^0_\ell}^2$}
\psfrag{N^{-1/2}}[cr][cr]{$N^{-1/2}$}
\psfrag{N}[cc][cc]{Degrees of freedom}
\includegraphics{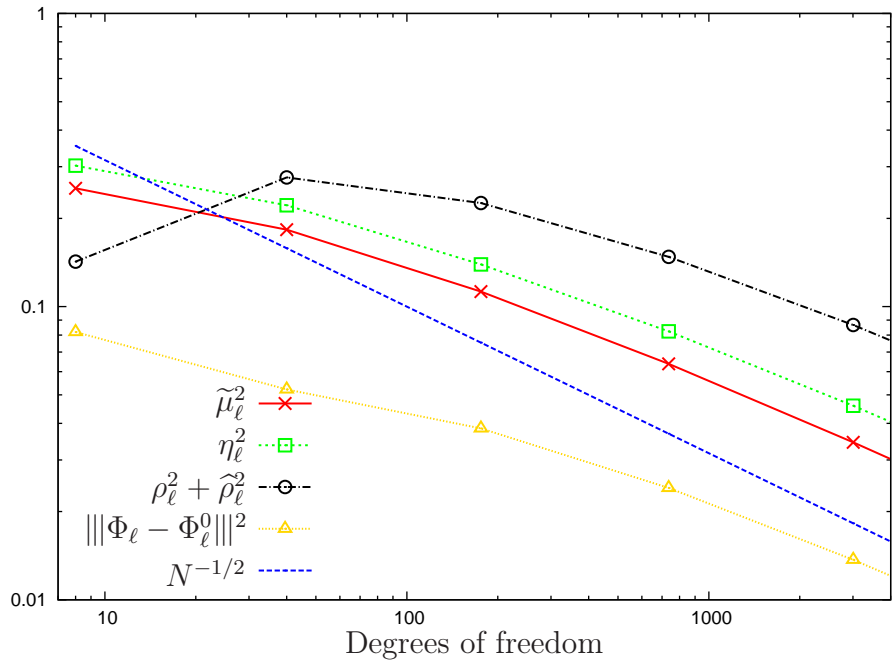}
\caption{Convergence rates for uniform mesh refinement and smooth solution. Note that we plot squared quantities, so that $\OO(N_\ell^{-1/2})$ corresponds to rate
of $\OO(h_\ell^{1/2})$ for the original quantities.}
\label{fig:unif_exact}
\end{figure}
\section{Preliminaries}\label{section:preliminaries}
\nopagebreak
\subsection{Conforming approximations and partial orthogonality}\label{section:conform}
For the development and analysis of the adaptive Crouzeix-Raviart boundary elements, it will be convenient to use
a decomposition of the space $\CR_\mesh$ into conforming and non-conforming components. Such a decomposition is
given by the identity
\begin{align*}
  \CR_\mesh = \FE_\mesh \oplus \CR^\perp_\mesh,
\end{align*}
where $\CR^\perp_\mesh$ is the orthogonal complement of $\FE_\mesh$ with respect to the inner product $a_\mesh(\cdot,\cdot)$.
For a function $\Phi\in\CR_\mesh$, we write $\Phi = \Phi^\CO + \Phi^\NC$ with $\Phi^\CO\in\FE_\mesh$ and $\Phi^\NC\in\CR^\perp_\mesh$.
We emphasize that there is a partial orthogonality, i.e., if $\mesh_\star$ is a refinement of $\mesh$, then
\begin{align*}
  a_\star(\phi-\Phi_\star,\Psi) = 0\quad\text{ for all } \Psi\in\FE_\mesh,
\end{align*}
where $\phi$ is the exact solution and $\Phi_\star\in\CR_{\mesh_\star}$ is its non-conforming Galerkin approximation.
In contrast to conforming methods, this orthogonality property cannot be extended to all $\Psi\in\CR_\mesh$.
However, it can be extended to a partial orthogonality as follows, cf.~\cite[Corollary 4.3]{bonito:nochetto}.
\begin{lemma}\label{lem:orth}
  Let $\mesh_\star$ be a refinement of $\mesh$ and $\Phi_\star\in\CR_{\mesh_\star}$ the Galerkin solution~\eqref{eq:galerkin} on $\mesh_\star$. 
  Then, for all $\varepsilon>0$,\definec{orth} and all $\Phi\in\CR_\mesh$, we have
  \begin{align*}
    a_\star(\phi-\Phi_\star,\phi-\Phi_\star) &\leq (1+\varepsilon)a(\phi-\Phi,\phi-\Phi)\\
    &\quad- \frac{\c{norm}^{-2}}{2}\enorm{\Phi-\Phi_\star}^2_\star
    + \Bigl(\c{norm}^2(1+\frac{1}{\varepsilon})+\c{norm}^{-2}\Bigr)
    \enorm{(\Phi-\Phi^\CO) - (\Phi_\star - \Phi_\star^\CO)}^2_\star
  \end{align*}
\end{lemma}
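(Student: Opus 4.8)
The plan is to reduce the assertion to a purely algebraic identity in the fine-mesh inner product $a_\star$, which is then closed up by Young's inequality, coercivity, and a single triangle inequality. Two preliminary observations make this work. First, every $\Psi\in\CR_\mesh\subset\PP^1(\mesh)$ is a single linear polynomial on each coarse element, so its element-wise curl is unchanged under refinement, $\scurl_\star\Psi=\scurl_\mesh\Psi$; since the exact solution $\phi\in\wilde H^{1/2}(\Gamma)$ is continuous, the same holds for $\phi$, and consequently $a_\star(\phi-\Phi,\phi-\Phi)=a(\phi-\Phi,\phi-\Phi)$ for $\Phi\in\CR_\mesh$. Second, the partial orthogonality $a_\star(\phi-\Phi_\star,\Psi)=0$ holds not only for $\Psi\in\FE_\mesh$ but for every conforming $\Psi\in\FE_{\mesh_\star}$: such $\Psi$ lies in $\wilde H^{1/2}(\Gamma)$, so $a_\star(\phi,\Psi)=\dual{f}{\Psi}$, while~\eqref{eq:galerkin} on $\mesh_\star$ gives $a_\star(\Phi_\star,\Psi)=\dual{f}{\Psi}$ because $\FE_{\mesh_\star}\subset\CR_{\mesh_\star}$.

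Abbreviate $e_\star:=\phi-\Phi_\star$, $C:=\Phi_\star^\CO-\Phi^\CO\in\FE_{\mesh_\star}$, and $g:=\Phi_\star^\NC-\Phi^\NC$, so that $\Phi_\star-\Phi=C+g$ and $\enorm{g}_\star=\enorm{(\Phi-\Phi^\CO)-(\Phi_\star-\Phi_\star^\CO)}_\star$. Expanding $\phi-\Phi=e_\star+(\Phi_\star-\Phi)$ inside $a_\star(\phi-\Phi,\phi-\Phi)$ and rearranging yields the exact identity
\begin{align*}
  a_\star(e_\star,e_\star)=a(\phi-\Phi,\phi-\Phi)-a_\star(\Phi_\star-\Phi,\Phi_\star-\Phi)-2a_\star(e_\star,\Phi_\star-\Phi).
\end{align*}
Since $C\in\FE_{\mesh_\star}$, partial orthogonality kills it in the last term, so $a_\star(e_\star,\Phi_\star-\Phi)=a_\star(e_\star,g)$. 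I would then write $e_\star=(\phi-\Phi)-(\Phi_\star-\Phi)$ in this cross term; the resulting piece $2a_\star(\Phi_\star-\Phi,g)$ is combined with $-a_\star(\Phi_\star-\Phi,\Phi_\star-\Phi)$, and upon inserting $\Phi_\star-\Phi=C+g$ the mixed contributions $\pm 2a_\star(C,g)$ cancel. This regrouping is the one genuinely delicate step, and its whole purpose is to avoid ever having to estimate the awkward quantity $a_\star(C,g)$. What survives is the clean exact identity
\begin{align*}
  a_\star(e_\star,e_\star)=a(\phi-\Phi,\phi-\Phi)-2a_\star(\phi-\Phi,g)-a_\star(C,C)+a_\star(g,g).
\end{align*}

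It then remains to estimate the two signed terms. To the cross term I would apply Cauchy--Schwarz and Young's inequality in the inner product $a_\star$, namely $-2a_\star(\phi-\Phi,g)\le\varepsilon\,a_\star(\phi-\Phi,\phi-\Phi)+\frac{1}{\varepsilon}a_\star(g,g)$, replace $a_\star(\phi-\Phi,\phi-\Phi)$ by $a(\phi-\Phi,\phi-\Phi)$, and bound the collected factor $(1+\frac{1}{\varepsilon})a_\star(g,g)\le\c{norm}^2(1+\frac{1}{\varepsilon})\enorm{g}_\star^2$ by the upper norm equivalence. For the coercive term I would use the lower equivalence $-a_\star(C,C)\le-\c{norm}^{-2}\enorm{C}_\star^2$ followed by the elementary inequality $\enorm{C}_\star^2=\enorm{(\Phi_\star-\Phi)-g}_\star^2\ge\tfrac12\enorm{\Phi_\star-\Phi}_\star^2-\enorm{g}_\star^2$, which produces precisely the coefficient $-\tfrac12\c{norm}^{-2}$ in front of $\enorm{\Phi-\Phi_\star}_\star^2$ together with the extra summand $\c{norm}^{-2}\enorm{g}_\star^2$. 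Collecting the $\enorm{g}_\star^2$ contributions into $\c{norm}^2(1+\frac{1}{\varepsilon})+\c{norm}^{-2}$ gives the claimed bound. The only real obstacle is the algebraic cancellation of the middle paragraph; the rest is bookkeeping with the two norm-equivalence constants.
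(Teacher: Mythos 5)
Your proof is correct and follows essentially the same route as the paper's: both exploit the orthogonality of $\phi-\Phi_\star$ to $\FE_{\mesh_\star}$ to arrive at the identical exact identity $a_\star(\phi-\Phi_\star,\phi-\Phi_\star)=a(\phi-\Phi,\phi-\Phi)-2a_\star(\phi-\Phi,g)-a_\star(C,C)+a_\star(g,g)$ (the paper gets there by a Pythagoras-type step first and then substituting $\Phi_\star+\Phi^\CO-\Phi_\star^\CO=\Phi-\Phi^\NC+\Phi_\star^\NC$, whereas you expand and cancel the $\pm2a_\star(C,g)$ terms, a purely cosmetic difference). The subsequent estimates --- Young's inequality on the cross term, ellipticity plus $\enorm{C}_\star^2\ge\tfrac12\enorm{\Phi_\star-\Phi}_\star^2-\enorm{g}_\star^2$ on the conforming part, and the norm equivalence with $\c{norm}$ --- coincide with the paper's and yield the same constants.
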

\begin{proof}
  As $\phi-\Phi_\star$ is orthogonal to $\FE_\mesh$ and $\FE(\mesh_\star)$, we have
  \begin{align*}
    a_\star(\phi-\Phi_\star,\phi-\Phi_\star)
    &= a_\star(\phi-\Phi_\star - \Phi^\CO+\Phi_\star^\CO,\phi-\Phi_\star-\Phi^\CO+\Phi_\star^\CO)
    - a_\star(\Phi_\star^\CO - \Phi^\CO,\Phi_\star^\CO-\Phi^\CO)\\
    &= a_\star(\phi-\Phi+\Phi^\NC-\Phi_\star^\NC,\phi-\Phi+\Phi^\NC-\Phi_\star^\NC)
    - a_\star(\Phi_\star^\CO - \Phi^\CO,\Phi_\star^\CO-\Phi^\CO)\\
    &= a_\star(\phi-\Phi,\phi-\Phi) + 2 a_\star(\Phi^\NC - \Phi_\star^\NC,\phi-\Phi)\\
    & \quad + a_\star(\Phi^\NC - \Phi_\star^\NC,\Phi^\NC-\Phi_\star^\NC) - a_\star(\Phi_\star^\CO - \Phi^\CO,\Phi_\star^\CO-\Phi^\CO),
  \end{align*}
  where we used the identity $\Phi_\star+\Phi^\CO-\Phi_\star^\CO = \Phi-\Phi^\NC+\Phi_\star^\NC$ in the second step. Using the stability, ellipticity, and
  Young's inequality $ab \leq a^2/(4\varepsilon) +\varepsilon b^2$, we obtain
  \begin{align*}
    2a_\star(\phi-\Phi,\Phi^\NC - \Phi_\star^\NC) &\leq 2a(\phi-\Phi,\phi-\Phi)^{1/2}a_{\star}(\Phi^\NC-\Phi_\star^\NC,\Phi^\NC-\Phi_\star^\NC)^{1/2}\\
    &\leq \varepsilon a(\phi-\Phi,\phi-\Phi) + \varepsilon^{-1}\c{norm}^2 \enorm{\Phi^\NC-\Phi_\star^\NC}_\star^2,
  \end{align*}
  as well as
  \begin{align*}
    \frac{\c{norm}^{-2}}{2}\enorm{\Phi_\star-\Phi}_\star^2 - \c{norm}^{-2}\enorm{\Phi_\star^\NC-\Phi^\NC}_\star^2 \leq
    \c{norm}^{-2} \enorm{\Phi_\star^\CO-\Phi^\CO}_\star^2 &\leq a_\star(\Phi_\star^\CO-\Phi^\CO,\Phi_\star^\CO-\Phi^\CO).
  \end{align*}
  Finally, the estimate
  \begin{align*}
    a_\star(\Phi^\NC-\Phi_\star^\NC,\Phi^\NC-\Phi_\star^\NC) &\leq \c{norm}^2 \enorm{\Phi^\NC-\Phi_\star^\NC}_\star^2
  \end{align*}
  concludes the proof.
\end{proof}

\subsection{Quasi-interpolation operators in $\wilde H^{-1/2}(\Gamma)$}\label{section:interpolation}
Lemma~\ref{lem:orth} will be the basis for the analysis of the a posteriori error estimators in Section~\ref{section:aposteriori},
such that terms of the form $\enorm{\Phi-\Phi^\CO}$ will emerge. Those terms are (in principle) computable. However,
they involve conforming approximations $\Phi^\CO$, which we don't want to compute, and hence we need to find a substitute involving only $\Phi$.
This will be done in Corollary~\ref{cor:ncon}, where we will estimate the nonconformity of a function $\Phi$ by its jumps over edges.
The proof of this corollary will be based on results of the present section, the aim of which is to
provide an interpolation operator to approximate the conforming part $\Phi^0$ of a given function $\Phi$.
We will use the well-known interpolation operator $I_\mesh$ by Cl\'ement~\cite{clement,scottzhang}, and provide
approximation results in the space $\wilde \bH^{-1/2}(\Gamma)$.
For a function $v\in L_2(\Gamma)$, this operator is defined as
\begin{align}\label{eq:I}
    &I_\mesh v := \sum_{z\in\nodes_\mesh} \psi(z) \varphi_z,
\end{align}
where $\varphi_z$ is the nodal basis function of $\FE_\mesh$ associated with the node $z\in\nodes_\mesh$.
The function $\psi\in\FE_\mesh|_{\omega_z}$ is such that
\begin{align*}
  \int_{\omega_z}(v-\psi) \varphi  = 0 \quad\text{ for all } \varphi\in\FE_\mesh|_{\omega_z}
\end{align*}
see also~\cite[Lemma 6.6]{bonito:nochetto}.
In addition, we denote by $\Pi_\mesh$ the $L_2(\Gamma)$ orthogonal projection onto the space of piecewise constants $[\PP^0(\mesh)]^2$.
The well-known properties of the operator $I_\mesh$ are collected
in the following lemma. We again refer to~\cite[Lemma 6.6]{bonito:nochetto} for a proof.
\begin{lemma}\label{lem:I}
  Let $\mesh$ be a refinement of $\mesh_0$. Then, there exists a constant $\setc{I}$ which depends only on $\mesh_0$ such that
  \begin{align}\label{eq:I:stab}
    \norm{I_\mesh \varphi}{L_2(\Gamma)} \leq \c{I}\norm{\varphi}{L_2(\Gamma)}\quad \text{ and }\quad \norm{I_\mesh \varphi}{H^1(\Gamma)} \leq \c{I}\norm{\varphi}{H^1(\Gamma)},
  \end{align}
  and such that for all $\el\in\mesh$, for all $\varphi\in H^1_0(\Gamma)$, and for all $\Phi\in\CR_\mesh$, it holds that
  \begin{subequations}\label{eq:I:apx}
    \begin{align}
      \norm{\varphi-I_\mesh \varphi}{L_2(\el)} &\leq \c{I} \norm{h_\mesh \nabla \varphi}{\bL_2(\elpatch)}\label{eq:I:apx:vol},\\
      \norm{\Phi-I_\mesh \Phi}{L_2(\el)} &\leq \c{I} \norm{h_\mesh^{1/2}\jump{\Phi}}{L_2(\edges_\elpatch)}\label{eq:I:sides:1},\\
      \norm{\nabla_\mesh(\Phi-I_\mesh \Phi)}{\bL_2(\el)} &\leq \c{I} \norm{h_\mesh^{-1/2}\jump{\Phi}}{L_2(\edges_\elpatch)}.\label{eq:I:sides:2}
    \end{align}
  \end{subequations}
  \hfill$\qed$
\end{lemma}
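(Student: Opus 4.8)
The plan is to derive all four statements by affine scaling to the finitely many reference patches provided by newest vertex bisection (Section~\ref{section:meshes}), combined with norm equivalence on the finite-dimensional local spaces $\FE_\mesh|_{\omega_z}$, Poincar\'e inequalities, and two structural properties of $I_\mesh$: it reproduces constants and, decisively, it reproduces the whole conforming space $\FE_\mesh$. For the stability bounds \eqref{eq:I:stab} I would first note that scaling the defining relation of the local $L_2(\omega_z)$-projection $\psi$ gives $\abs{\psi(z)}\lesssim h_z^{-1}\norm{v}{L_2(\omega_z)}$ for the local mesh-width $h_z$, whence $\norm{\psi(z)\varphi_z}{L_2(\omega_z)}\lesssim\norm{v}{L_2(\omega_z)}$; summing over the finitely overlapping patches yields $L_2$-stability. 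Since each local projection fixes constants and $\sum_z\varphi_z\equiv1$ in the interior, $I_\mesh$ reproduces constants, so on each $\el$ one has $\nabla I_\mesh v=\nabla I_\mesh(v-\bar v)$ for any constant $\bar v$; an inverse inequality, local $L_2$-stability, and Poincar\'e on the patch then give $\norm{\nabla I_\mesh v}{\bL_2(\el)}\lesssim\norm{\nabla v}{\bL_2(\elpatch)}$, which together with $L_2$-stability is the $H^1$-bound. The volume estimate \eqref{eq:I:apx:vol} is then the classical Cl\'ement bound: writing $\varphi-I_\mesh\varphi=(\varphi-\bar\varphi)-I_\mesh(\varphi-\bar\varphi)$ and using $L_2$-stability and Poincar\'e on $\elpatch$ bounds it by $\norm{h_\mesh\nabla\varphi}{\bL_2(\elpatch)}$, with the homogeneous condition $\varphi\in H^1_0(\Gamma)$ handling the patches meeting $\partial\Gamma$.

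For the jump bounds \eqref{eq:I:sides:1} and \eqref{eq:I:sides:2} the key observation is that $I_\mesh$ reproduces $\FE_\mesh$: if $w\in\FE_\mesh$, then $w|_{\omega_z}\in\FE_\mesh|_{\omega_z}$ is fixed by its own local projection, so $\psi(z)=w(z)$ and $I_\mesh w=\sum_z w(z)\varphi_z=w$. Hence for any $w\in\FE_\mesh$ we may write $\Phi-I_\mesh\Phi=(\Phi-w)-I_\mesh(\Phi-w)$, and $L_2$-stability gives $\norm{\Phi-I_\mesh\Phi}{L_2(\el)}\lesssim\norm{\Phi-w}{L_2(\elpatch)}$. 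I would then introduce the conforming averaging operator $\mathcal{A}\colon\CR_\mesh\to\FE_\mesh$, $\mathcal{A}\Phi:=\sum_{z\in\nodes_\mesh}\alpha_z\varphi_z$ with $\alpha_z$ the arithmetic mean of $\set{\Phi|_\el(z)}{\el\ni z}$, which vanishes at boundary nodes and hence lies in $\FE_\mesh$, and choose $w=\mathcal{A}\Phi$. On a single $\el$ the linear function $(\Phi-\mathcal{A}\Phi)|_\el$ is determined by its vertex values $\Phi|_\el(z)-\alpha_z$; each such value is an average of differences $\Phi|_\el(z)-\Phi|_{\el'}(z)$ over $\omega_z$, and crossing the edges of $\omega_z$ one at a time writes every difference as a telescoping sum of the jumps $\jump{\Phi}$ evaluated at $z$. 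With the trace scaling $\abs{\jump{\Phi}(z)}\sim h_\ed^{-1/2}\norm{\jump{\Phi}}{L_2(\ed)}$ (valid because the jump is linear and vanishes at $\bfm_\ed$) and $\norm{\text{linear}}{L_2(\el)}\sim h_\el\,\abs{\text{vertex value}}$, this produces the localized averaging estimate $\norm{\Phi-\mathcal{A}\Phi}{L_2(\el)}\lesssim\norm{h_\mesh^{1/2}\jump{\Phi}}{L_2(\edges_\elpatch)}$, which is exactly \eqref{eq:I:sides:1}. For patches touching $\partial\Gamma$, the vanishing midpoint condition $\Phi(\bfm_\ed)=0$ on boundary edges supplies the jump control of the boundary vertex values that $\mathcal{A}\Phi$ sets to zero.

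Finally, \eqref{eq:I:sides:2} follows with no separate gradient estimate: since $I_\mesh\mathcal{A}\Phi=\mathcal{A}\Phi$, one has $\Phi-I_\mesh\Phi=(\Phi-\mathcal{A}\Phi)-I_\mesh(\Phi-\mathcal{A}\Phi)$, both summands being linear on $\el$, so the elementwise inverse inequality $\norm{\nabla p}{\bL_2(\el)}\lesssim h_\el^{-1}\norm{p}{L_2(\el)}$ together with local $L_2$-stability gives $\norm{\nabla_\mesh(\Phi-I_\mesh\Phi)}{\bL_2(\el)}\lesssim h_\el^{-1}\norm{\Phi-\mathcal{A}\Phi}{L_2(\elpatch)}$; inserting the bound from the previous paragraph and using $h_\el\sim h_\ed$ for the edges of the patch turns $h_\el^{-1}h_\mesh^{1/2}$ into $h_\mesh^{-1/2}$ and yields the weighted jump bound $\norm{h_\mesh^{-1/2}\jump{\Phi}}{L_2(\edges_\elpatch)}$. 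I expect the main obstacle to be the localized averaging estimate of the second paragraph, namely tracking the exact powers of $h$ through the trace scaling of the midpoint-vanishing jumps and correctly treating the boundary edges, so that $\mathcal{A}\Phi$ honors the homogeneous condition defining $\FE_\mesh$ while the Crouzeix--Raviart condition $\Phi(\bfm_\ed)=0$ furnishes the missing jump data near $\partial\Gamma$.
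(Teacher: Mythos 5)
Your proposal is correct in substance, but note that the paper itself contains no proof of this lemma: it is stated as known, with the proof deferred to \cite[Lemma~6.6]{bonito:nochetto}. What you reconstruct is the classical explicit argument: the projection property $I_\mesh w=w$ for $w\in\FE_\mesh$, an averaging operator $\mathcal{A}\colon\CR_\mesh\to\FE_\mesh$, and telescoping of nodal differences through midpoint-vanishing jumps. The cited reference --- and this paper itself whenever it needs analogous facts, see the proof of Lemma~\ref{lem:poinc} --- argues more compactly: on each patch both sides of the asserted inequality are seminorms on a finite-dimensional space of piecewise polynomials, the left-hand one vanishes whenever the right-hand one does, so domination follows from norm equivalence and scaling, using that NVB produces only finitely many reference patches. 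Your route is more elementary and shows where the jumps actually enter; the seminorm route is shorter and keeps every estimate on the one-layer patch $\elpatch$.

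Three points in your write-up need repair. (i) Your proof of the $H^1$-bound in \eqref{eq:I:stab} rests on reproduction of constants, which fails on elements touching $\partial\Gamma$, because $\nodes_\mesh$ excludes boundary nodes and $I_\mesh$ maps into a subspace of $H^1_0(\Gamma)$; there you need the Friedrichs inequality, hence vanishing boundary values of the argument --- exactly the fix you invoke, but only for \eqref{eq:I:apx:vol}. This restriction is genuinely needed: for $\varphi\equiv 1$ one has $I_\mesh\varphi=\sum_{z\in\nodes_\mesh}\varphi_z$, whose gradient is of size $h^{-1}$ on the boundary strip, so that $\norm{\nabla I_\mesh\varphi}{\bL_2(\Gamma)}^2\simeq h^{-1}$ on a quasi-uniform mesh and the estimate cannot hold for all $\varphi\in H^1(\Gamma)$. (ii) Your chaining for \eqref{eq:I:sides:1}--\eqref{eq:I:sides:2} --- $L_2$-stability over $\elpatch$, then the averaging estimate on each element of $\elpatch$ --- produces jump terms on a two-layer patch, not on $\edges_\elpatch$ as stated. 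This is harmless for every application in the paper (only finite overlap of patches is ever used), and it can be repaired by running the seminorm-scaling argument directly on $\elpatch$, using that $I_\mesh\Phi|_\el$ depends only on $\Phi|_\elpatch$. (iii) For \eqref{eq:I:sides:1} to be true at all, the jump on a boundary edge must be read as the trace of $\Phi$ itself (exterior value zero): a continuous $\Phi\in\CR_\mesh$ with values $\pm c$ alternating at consecutive boundary nodes and $0$ at boundary-edge midpoints has vanishing interior jumps yet $I_\mesh\Phi\neq\Phi$. Your closing remark about the midpoint condition supplying the missing jump data is precisely this point, but the convention should be made explicit, since the telescoping at a boundary node $z$ must terminate with the boundary-edge contribution $\abs{\Phi|_\el(z)}\lesssim h_\ed^{-1/2}\norm{\Phi}{L_2(\ed)}$.
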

For our purposes, we need to analyze the properties of $I_\mesh$ in the space $\wilde\bH^{-1/2}(\Gamma)$.
To do so, we will use integration by parts piecewise. The resulting integrals over the skeleton $\edges_\mesh$
will be bounded with the aid of the following auxiliary result.
\begin{lemma}\label{lem:poinc}
  Let $\mesh$ be a refinement of $\mesh_0$ with the set of edges $\edges_\mesh$. Then, there is a constant $\setc{poinc}$ which depends only on $\mesh_0$
  such that for any choice of functions $\Phi\in\CR_\mesh$ and $\V\in\left[ \FE_\mesh \right]^2$, it holds that
  \begin{align}\label{eq:poinc}
    \int_{\edges_\mesh}\jump{\Phi}\avg{\V} \leq \c{poinc} \norm{\jump{\Phi}}{L_2(\edges_\mesh)} \norm{\V}{\bH^{1/2}(\Gamma)}.
  \end{align}
  Furthermore, if $\wat\mesh$ is the uniform refinement of $\mesh$ and $\wat\Phi\in\FE_{\wat\mesh}$, it holds that
  \begin{align}\label{eq:poinc:2}
    \int_{\edges_{\wat\mesh}}\jump{\wat\Phi}\avg{\V} \leq \c{poinc} \norm{h_\mesh^{1/2}(1-\Pi_\mesh)\nabla_{\wat\mesh}\wat\Phi}{\bL_2(\Gamma)} \norm{\V}{\bH^{1/2}(\Gamma)}.
  \end{align}
\end{lemma}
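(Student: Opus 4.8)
The plan is to handle both estimates by a common mechanism: exploit that $\jump{\Phi}$ has vanishing mean on every edge, pair it against the \emph{oscillation} of $\V$ rather than against $\V$ itself, and control the resulting local quantities by a scaling argument on the finitely many reference patches furnished by NVB. For~\eqref{eq:poinc} I would first reduce to interior edges: since $\V\in[\FE_\mesh]^2$ vanishes on $\partial\Gamma$ the integrand vanishes on boundary edges, and since $\V$ is continuous we have $\avg{\V}=\V$ on interior edges. On an interior edge one has $\jump{\Phi}|_\ed=(\Phi^+-\Phi^-)\n^+$ with $\Phi^+-\Phi^-$ affine and vanishing at $\bfm_\ed$ (midpoint continuity of $\Phi\in\CR_\mesh$), hence $\int_\ed\jump{\Phi}\,ds=\bfnull$. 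Writing $\overline{\V}_\ed:=|\ed|^{-1}\int_\ed\V\,ds$, this mean-zero property and Cauchy--Schwarz give
\begin{align*}
  \int_{\edges_\mesh}\jump{\Phi}\avg{\V}
  = \sum_{\ed}\int_\ed\jump{\Phi}\cdot(\V-\overline{\V}_\ed)
  \leq \Big(\sum_{\ed}\norm{\jump{\Phi}}{L_2(\ed)}^2\Big)^{1/2}
        \Big(\sum_{\ed}\norm{\V-\overline{\V}_\ed}{L_2(\ed)}^2\Big)^{1/2}.
\end{align*}
The first factor is $\norm{\jump{\Phi}}{L_2(\edges_\mesh)}$, so everything reduces to the fractional Poincar\'e estimate
\begin{align*}
  \sum_{\ed\in\edges_\mesh}\norm{\V-\overline{\V}_\ed}{L_2(\ed)}^2 \leq C\,\snorm{\V}{H^{1/2}(\Gamma)}^2.
\end{align*}

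This last inequality is the main obstacle, because at the borderline order $1/2$ the trace $H^{1/2}(\omega_\ed)\to L_2(\ed)$ is \emph{not} bounded for general functions; the point is to use crucially that $\V$ is piecewise affine. On each reference edge patch $\wat\omega$ the map $\wat\V\mapsto\norm{\wat\V-\overline{\wat\V}_{\wat\ed}}{L_2(\wat\ed)}$ and the seminorm $\snorm{\wat\V}{H^{1/2}(\wat\omega)}$ are two seminorms on the finite-dimensional space $[\FE_{\wat\omega}]^2$ with the same kernel, namely the constants, so $\norm{\wat\V-\overline{\wat\V}_{\wat\ed}}{L_2(\wat\ed)}\leq C\,\snorm{\wat\V}{H^{1/2}(\wat\omega)}$ holds there; as NVB produces only finitely many reference patches, $C$ is uniform. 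I would then transport this to $\ed$ by the affine patch map: both sides carry the same power of the local mesh size, so the $h$-factors cancel and one obtains the scale-invariant local bound $\norm{\V-\overline{\V}_\ed}{L_2(\ed)}^2\leq C\,\snorm{\V}{H^{1/2}(\omega_\ed)}^2$. Summing over $\ed$ and invoking the finite overlap of the patches $\omega_\ed$ (so that $\sum_\ed\snorm{\V}{H^{1/2}(\omega_\ed)}^2\lesssim\snorm{\V}{H^{1/2}(\Gamma)}^2$) completes~\eqref{eq:poinc}.

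For~\eqref{eq:poinc:2} I read $\wat\Phi$ as a Crouzeix--Raviart function on $\wat\mesh$ (for a conforming $\wat\Phi$ all jumps vanish and the claim is void). Since $\FE_\mesh\subseteq\FE_{\wat\mesh}$, applying~\eqref{eq:poinc} on the refined mesh $\wat\mesh$ yields
\begin{align*}
  \int_{\edges_{\wat\mesh}}\jump{\wat\Phi}\avg{\V}
  \leq \c{poinc}\,\norm{\jump{\wat\Phi}}{L_2(\edges_{\wat\mesh})}\,\norm{\V}{\bH^{1/2}(\Gamma)},
\end{align*}
so it remains to establish $\norm{\jump{\wat\Phi}}{L_2(\edges_{\wat\mesh})}\leq C\,\norm{h_\mesh^{1/2}(1-\Pi_\mesh)\nabla_{\wat\mesh}\wat\Phi}{\bL_2(\Gamma)}$. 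I would again argue patchwise on reference configurations; the decisive point is that the right-hand side controls the jumps up to the common kernel. If $(1-\Pi_\mesh)\nabla_{\wat\mesh}\wat\Phi=\bfnull$ on a coarse element $\el\in\mesh$, then $\wat\Phi|_\el$ is a single affine function, so the jumps across fine edges interior to $\el$ vanish; moreover the affine jump of $\wat\Phi$ along a coarse edge also vanishes, since the midpoint continuity of $\CR_{\wat\mesh}$ forces it to vanish at the two fine midpoints of that edge. Hence the kernel of the weighted oscillation lies in the kernel of the jump seminorm, so on each finite-dimensional reference patch the inequality follows from equivalence of seminorms, the weight $h_\mesh^{1/2}$ being exactly what makes both sides scale identically; finite overlap then gives the global bound, and combining the two displays proves~\eqref{eq:poinc:2}.
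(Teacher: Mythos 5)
Your proposal is correct and takes essentially the same route as the paper's proof: for~\eqref{eq:poinc}, the vanishing edge means of Crouzeix--Raviart jumps plus Cauchy--Schwarz, subtraction of a constant per edge controlled by equivalence of seminorms on the finitely many NVB reference patches (with matching $h^{1/2}$-scaling of the edge $L_2$-norm and the Slobodeckij seminorm), and finite patch overlap; and for~\eqref{eq:poinc:2}, applying~\eqref{eq:poinc} on $\wat\mesh$ and then bounding the fine-mesh jumps by the weighted oscillation $\norm{h_\mesh^{1/2}(1-\Pi_\mesh)\nabla_{\wat\mesh}\wat\Phi}{\bL_2(\Gamma)}$ through exactly the same kernel-inclusion/scaling argument. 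Your reading of $\wat\Phi$ as an element of $\CR_{\wat\mesh}$ rather than $\FE_{\wat\mesh}$ is also the intended one, since the paper's own proof and its later application in Lemma~\ref{lem:I:neg} require the Crouzeix--Raviart midpoint continuity on the fine mesh.
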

\begin{proof}
  For every edge $\ed\in\edges_\mesh$, we use an affine map to transfer the edge patch $\omega_\ed$ to a reference configuration $\overline{\omega_\ed}$.
  As we emphasized in Section~\ref{section:meshes}, the number of this reference configurations is bounded uniformly, which permits
  us to use scaling arguments. Now we choose $\bfc_\ed\in\R^2$ such that
  \begin{align*}
    \norm{\overline\V - \bfc_\ed}{\bL_2(\overline{\ed})} \lesssim \snorm{\overline\V}{\bH_{\rm slo}^{1/2}(\overline{\omega_\ed})},
  \end{align*}
  which is possible since $\overline\V$ is an element of a finite dimensional space. Here, the index ${\rm slo}$ indicates that the norm is defined according to Sobolev-Slobodeckij.
  Mapping both sides back to the physical domain yields
  \begin{align}\label{lem:poinc:eq:1}
    \norm{\V - \bfc_\ed}{\bL_2(\ed)} \lesssim \snorm{\V}{\bH_{\rm slo}^{1/2}(\omega_\ed)} \leq \norm{\V}{\bH_{\rm slo}^{1/2}(\omega_\ed)}.
  \end{align}
  As $\Phi$ is a Crouzeix-Raviart function, its jump $\jump{\Phi}$ has vanishing integral mean on every edge $\ed\in\edges_\mesh$, and hence,
  using the Cauchy-Schwarz inequality, we obtain with~\eqref{lem:poinc:eq:1}
  \begin{align*}
    \begin{split}
      \int_{\edges_\mesh}\jump{\Phi}\avg{\V} = \sum_{\ed\in\edges_\mesh} \int_{\ed}\jump{\Phi}\avg{\V-\bfc_\ed}
      &\leq \left(\sum_{\ed\in\edges_\mesh} \norm{\jump{\Phi}}{L_2(\ed)}^2\right)^{1/2} \left(\sum_{\ed\in\edges_\mesh}\norm{\V - \bfc_\ed}{\bL_2(\ed)}^2\right)^{1/2}\\
      &\leq \left(\sum_{\ed\in\edges_\mesh} \norm{\jump{\Phi}}{L_2(\ed)}^2\right)^{1/2} \left(\sum_{\ed\in\edges_\mesh}\norm{\V}{\bH^{1/2}_{\rm slo}(\omega_\ed)}^2 \right)^{1/2}.
      \end{split}
  \end{align*}
  Locally, only three patches $\omega_\ed$ overlap, and the fact that the norms $\bH_{\rm slo}^{1/2}(\Gamma)$
  and $\bH^{1/2}(\Gamma)$ are equivalent finally concludes the proof of~\eqref{eq:poinc}.
  Now we prove~\eqref{eq:poinc:2}. We start at~\eqref{eq:poinc}, this time with $\wat\mesh$ instead of $\mesh$, to obtain
  \begin{align*}
    \int_{\edges_{\wat\mesh}}\jump{\wat\Phi}\avg{\V} \lesssim \left(\sum_{\ed\in\edges_{\wat\mesh}} \norm{\jump{\wat\Phi}}{L_2(\ed)}^2\right)^{1/2} \norm{\V}{\bH^{1/2}(\Gamma)}.
  \end{align*}
  Now we split the $L_2$ norm of the jump $\jump{\wat\Phi}$ over the skeleton $\edges_{\wat\mesh}$ into the contributions on the skeleton $\edges_\mesh$ and the rest,
  which we write sloppy as $\edges_{\wat\mesh}\setminus\edges_\mesh$. Then,
  \begin{align}\label{lem:poinc:eq:4}
    \sum_{\ed\in\edges_{\wat\mesh}} \norm{\jump{\wat\Phi}}{L_2(\ed)}^2 = \sum_{\ed\in\edges_{\mesh}}\norm{\jump{\wat\Phi}}{L_2(\ed)}^2
    + \sum_{\ed\in\edges_{\wat\mesh}\setminus\edges_\mesh}\norm{\jump{\wat\Phi}}{L_2(\ed)}^2.
  \end{align}
  We claim that there is a constant $C>0$, independent of $\edges_{\wat\mesh}$ and $\wat\Phi$ such that
  \begin{align*}
    \norm{\jump{\wat\Phi}}{L_2(\ed)} &\leq C h_\ed^{1/2}\norm{(1-\Pi_\mesh)\nabla_{\wat\mesh}\wat\Phi}{\bL_2(\omega_\ed)}\quad \text{ if } \ed\in\edges_{\mesh},\\
    \norm{\jump{\wat\Phi}}{L_2(\ed)} &\leq C h_\ed^{1/2}\norm{(1-\Pi_\mesh)\nabla_{\wat\mesh}\wat\Phi}{\bL_2(\el)}
    \quad\text{ if } \ed\in\edges_{\wat\mesh}\setminus\edges_\mesh \text{ with } \ed\subset\el\in\mesh.
  \end{align*}
  Both sides define seminorms, and the left one vanishes when the right one does. Hence, the bounded dimension of the underlying space and a scaling argument prove the claim.
  Using the last two estimates in~\eqref{lem:poinc:eq:4} shows~\eqref{eq:poinc:2}.
\end{proof}
\begin{lemma}\label{lem:I:neg}
  In addition to Lemma~\ref{lem:I}, we have the following estimates, where $\wat\mesh$ denotes the uniform refinement of $\mesh$: For
  $\Phi\in\CR_\mesh$ and $\wat\Phi\in\CR_{\wat\mesh}$, it holds that
  \begin{subequations}\label{eq:I:neg}
    \begin{align}
      \norm{\nabla_\mesh(1-I_\mesh)\Phi}{\wilde\bH^{-1/2}(\Gamma)} &\leq \c{I} \norm{h_\mesh\jump{\Phi}'}{L_2(\edges_\mesh)}\label{eq:I:neg:1},\\
      \norm{\nabla_{\wat\mesh}(1-I_\mesh)\wat\Phi}{\wilde\bH^{-1/2}(\Gamma)} &\leq \c{I} \norm{h_\mesh^{1/2}(1-\Pi_\mesh)\nabla_{\wat\mesh}\wat\Phi}{\bL_2(\Gamma)}.\label{eq:I:neg:2}
    \end{align}
  \end{subequations}
\end{lemma}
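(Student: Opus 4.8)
The plan is to argue by duality. Since $\wilde\bH^{-1/2}(\Gamma)=\big(\bH^{1/2}(\Gamma)\big)'$, we have
$\norm{\nabla_\mesh(1-I_\mesh)\Phi}{\wilde\bH^{-1/2}(\Gamma)}=\sup_{\V}\dual{\nabla_\mesh(1-I_\mesh)\Phi}{\V}/\norm{\V}{\bH^{1/2}(\Gamma)}$, the supremum taken over $\V\in\bH^{1/2}(\Gamma)$, so it suffices to bound the numerator for a fixed $\V$. Writing $u:=(1-I_\mesh)\Phi$, the field $\nabla_\mesh u$ is $\mesh$-piecewise constant. The key difficulty is that the edge bound of Lemma~\ref{lem:poinc} is only available for \emph{discrete} test functions, whereas $\V$ is a general $\bH^{1/2}$-function; so I would insert the Cl\'ement interpolant $\V_I:=I_\mesh\V\in[\FE_\mesh]^2$ and split $\dual{\nabla_\mesh u}{\V}=\dual{\nabla_\mesh u}{\V-\V_I}+\dual{\nabla_\mesh u}{\V_I}$, the second pairing now being testable by Lemma~\ref{lem:poinc}.

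For the first pairing I would use a weighted Cauchy--Schwarz inequality, $\dual{\nabla_\mesh u}{\V-\V_I}\le\norm{h_\mesh^{1/2}\nabla_\mesh u}{\bL_2(\Gamma)}\norm{h_\mesh^{-1/2}(\V-\V_I)}{\bL_2(\Gamma)}$, and bound the two factors by the gradient estimate \eqref{eq:I:sides:2} of Lemma~\ref{lem:I} (which after multiplying by $h_\mesh^{1/2}$ and summing over the finitely overlapping patches gives $\norm{h_\mesh^{1/2}\nabla_\mesh u}{\bL_2(\Gamma)}\lesssim\norm{\jump{\Phi}}{L_2(\edges_\mesh)}$) and by the Cl\'ement approximation estimate $\norm{h_\mesh^{-1/2}(\V-\V_I)}{\bL_2(\Gamma)}\lesssim\norm{\V}{\bH^{1/2}(\Gamma)}$, which follows from Lemma~\ref{lem:I} by interpolation. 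For the second pairing I would integrate by parts elementwise. Since $\V_I$ is continuous and vanishes on $\partial\Gamma$, and since $\jump{(1-I_\mesh)\Phi}=\jump{\Phi}$ (the interpolant being continuous), this produces an edge term $\int_{\edges_\mesh}\jump{\Phi}\avg{\V_I}$ together with a volume term $-\int_\Gamma u\,\div\V_I$. The edge term is exactly the object of Lemma~\ref{lem:poinc}, giving $\lesssim\norm{\jump{\Phi}}{L_2(\edges_\mesh)}\norm{\V_I}{\bH^{1/2}(\Gamma)}\lesssim\norm{\jump{\Phi}}{L_2(\edges_\mesh)}\norm{\V}{\bH^{1/2}(\Gamma)}$.

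The hard part is the volume term, since a crude bound $\norm{u}{L_2}\norm{\div\V_I}{L_2}$ would force an inverse power of $h_\mesh$ on $\V$ that cannot be absorbed into $\norm{\V}{\bH^{1/2}(\Gamma)}$. The resolution is to redistribute the weights as $\int_\Gamma u\,\div\V_I\le\norm{h_\mesh^{-1/2}u}{L_2(\Gamma)}\norm{h_\mesh^{1/2}\div\V_I}{L_2(\Gamma)}$: the first factor is controlled by $\norm{\jump{\Phi}}{L_2(\edges_\mesh)}$ through the $L_2$ estimate \eqref{eq:I:sides:1}, while the second obeys the fractional inverse estimate $\norm{h_\mesh^{1/2}\div\V_I}{L_2(\Gamma)}\lesssim\norm{\V}{\bH^{1/2}(\Gamma)}$, proved by scaling to the reference patches (both sides being seminorms that vanish on constants) together with $\sum_{\el}\snorm{\V_I}{\bH^{1/2}(\el)}^2\le\snorm{\V_I}{\bH^{1/2}(\Gamma)}^2$ and Cl\'ement stability. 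Collecting the three contributions yields $\norm{\nabla_\mesh(1-I_\mesh)\Phi}{\wilde\bH^{-1/2}(\Gamma)}\lesssim\norm{\jump{\Phi}}{L_2(\edges_\mesh)}$, and since $\jump{\Phi}$ is affine on each edge and vanishes at its midpoint, $\norm{\jump{\Phi}}{L_2(\ed)}\simeq\norm{h_\mesh\jump{\Phi}'}{L_2(\ed)}$, which gives \eqref{eq:I:neg:1}.

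For \eqref{eq:I:neg:2} I would follow the same template, now with the coarse interpolant $I_\mesh$ acting on $\wat\Phi\in\CR_{\wat\mesh}$ and with the edge term estimated by \eqref{eq:poinc:2} rather than by \eqref{eq:poinc}. The difference is that the fine jumps $\jump{\wat\Phi}$ and the quantities $\norm{h_\mesh^{1/2}\nabla_{\wat\mesh}(1-I_\mesh)\wat\Phi}{\bL_2(\Gamma)}$ and $\norm{h_\mesh^{-1/2}(1-I_\mesh)\wat\Phi}{\bL_2(\Gamma)}$ must all be controlled by $\norm{h_\mesh^{1/2}(1-\Pi_\mesh)\nabla_{\wat\mesh}\wat\Phi}{\bL_2(\Gamma)}$; these are local seminorm estimates of exactly the type already appearing in the proof of Lemma~\ref{lem:poinc} (and reflect that $\nabla I_\mesh\wat\Phi$ is a quasi-optimal coarse piecewise-constant approximation of $\nabla_{\wat\mesh}\wat\Phi$), again established by scaling on the finite set of reference patches. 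I expect the only genuinely delicate points throughout to be the behaviour at $\partial\Gamma$, where $\V_I$ vanishes but $\V$ need not, and the careful bookkeeping of the mesh-size weights so that no negative power of $h_\mesh$ survives.
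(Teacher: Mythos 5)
Your proposal is correct and follows essentially the same route as the paper's own proof: duality, insertion of the Cl\'ement interpolant $I_\mesh\vv$, piecewise integration by parts yielding exactly the same three terms (approximation term, volume term with $\div I_\mesh\vv$, and edge term), which are then bounded respectively by the estimates of Lemma~\ref{lem:I} together with the interpolation argument for $\norm{h_\mesh^{-1/2}(\vv-I_\mesh\vv)}{\bL_2(\Gamma)}$, by the $h^{1/2}$-weighted inverse estimate combined with $\bH^{1/2}(\Gamma)$-stability of $I_\mesh$, and by Lemma~\ref{lem:poinc}, followed by the edge Poincar\'e inequality to arrive at \eqref{eq:I:neg:1}; your treatment of \eqref{eq:I:neg:2} via \eqref{eq:poinc:2} and local scaling estimates against $\norm{h_\mesh^{1/2}(1-\Pi_\mesh)\nabla_{\wat\mesh}\wat\Phi}{\bL_2(\Gamma)}$ likewise coincides with the paper's. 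The only (harmless) deviation is that you sketch an elementwise scaling proof of the inverse estimate $\norm{h_\mesh^{1/2}\div I_\mesh\vv}{L_2(\Gamma)}\lesssim\norm{I_\mesh\vv}{\bH^{1/2}(\Gamma)}$, whereas the paper simply cites the literature for it.
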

\begin{proof}
  We will use estimates~\eqref{eq:I:stab} and~\eqref{eq:I:apx} to prove this lemma.
  First, if we denote by $I_\mesh\vv$ the component-wise action of $I_\mesh$ to $\vv\in\bH^{1/2}(\Gamma)$, we integrate by parts piecewise to obtain
  \begin{align*}
    \dual{\nabla_\mesh(1 - I_\mesh)\Phi}{I_\mesh\vv} = -\dual{(1-I_\mesh)\Phi}{\div I_\mesh \vv} + \sum_{\el\in\mesh} \int_{\edges_\el} (\Phi-I_\mesh\Phi) I_\mesh\vv\cdot\n_\el.
  \end{align*}
  As $\jump{I_\mesh\Phi}$ vanishes due to the continuity of $I_\mesh\Phi$, the second term on the right-hand side can be written as
  \begin{align*}
    \sum_{\el\in\mesh} \int_{\edges_\el} (\Phi-I_\mesh\Phi) I_\mesh\vv\cdot\n_\el &=
    \int_{\edges_\mesh}\jump{\Phi-I_\mesh\Phi}\avg{I_\mesh\vv} +
    \int_{\edges_\mesh\setminus\partial\Gamma}\avg{\Phi-I_\mesh\Phi}\jump{I_\mesh\vv}\\
    &= \int_{\edges_\mesh}\jump{\Phi}\avg{I_\mesh\vv}.
  \end{align*}
  We conclude that, for any $\vv\in\bH^{1/2}(\Gamma)$,
  \begin{align}\label{eq:lem:I:1}
    \begin{split}
      \dual{\nabla_\mesh(1-I_\mesh)\Phi}{\vv} &= \dual{\nabla_\mesh(1-I_\mesh)\Phi}{\vv-I_\mesh\vv} - \dual{(1-I_\mesh)\Phi}{\div I_\mesh\vv}\\
      &\qquad+ \int_{\edges_\mesh}\jump{\Phi}\avg{I_\mesh\vv}.
    \end{split}
  \end{align}
  We bound the terms on the right-hand side separately.
  Taking into account~\eqref{eq:I:sides:2}, the first term on the right-hand side of~\eqref{eq:lem:I:1} can be estimated by
  \begin{align}\label{eq:lem:I:4}
    \begin{split}
      \dual{\nabla_\mesh(1-I_\mesh)\Phi}{\vv-I_\mesh\vv} &\leq \sum_{\el\in\mesh}\norm{\nabla_\mesh(1-I_\mesh)\Phi}{\bL_2(\el)} \norm{\vv-I_\mesh\vv}{\bL_2(\el)}\\
      &\lesssim \sum_{\el\in\mesh} h_\mesh|_\el^{-1/2}\norm{\jump{\Phi}}{L_2(\edges_\elpatch)} \norm{\vv-I_\mesh\vv}{\bL_2(\el)}\\
      &\leq \norm{\jump{\Phi}}{L_2(\edges_\mesh)} \norm{h_\mesh^{-1/2}(\vv-I_\mesh\vv)}{\bL_2(\Gamma)}.
    \end{split}
  \end{align}
  Now, it holds that $\norm{h_\mesh^{-1/2}(\vv-I_\mesh\vv)}{\bL_2(\Gamma)} \lesssim \norm{\vv}{\bH^{1/2}(\Gamma)}$, which follows from interpolation of the estimates
  \begin{align*}
    \norm{\vv-I_\mesh\vv}{\bL_2(\Gamma)}\lesssim\norm{\vv}{\bL_2(\Gamma)} \quad \text{ and } \quad \norm{h_\mesh^{-1}(\vv-I_\mesh\vv)}{\bL_2(\Gamma)} \lesssim \norm{\vv}{\bH^1(\Gamma)},
  \end{align*}
  which themselves can be derived summing~\eqref{eq:I:stab} and~\eqref{eq:I:apx:vol} over the elements of the mesh. We conclude that
  \begin{align}\label{eq:lem:I:2}
    \dual{\nabla_\mesh(1-I_\mesh)\Phi}{\vv-I_\mesh\vv} \lesssim \norm{\jump{\Phi}}{L_2(\edges_\mesh)}\norm{\vv}{\bH^{1/2}(\Gamma)}.
  \end{align}
  The second contribution on the right-hand side of~\eqref{eq:lem:I:1} can be bounded by using~\eqref{eq:I:sides:1} via
  \begin{align}\label{eq:lem:I:3}
    \begin{split}
    \dual{\Phi-I_\mesh\Phi}{\div I_\mesh\vv} &\leq \sum_{\el\in\mesh}\norm{\Phi-I_\mesh\Phi}{L_2(\el)}\norm{\div I_\mesh\vv}{L_2(\el)}\\
    &\lesssim\norm{\jump{\Phi}}{L_2(\edges_\mesh)}\norm{h_\mesh^{1/2}\div I_\mesh\vv}{L_2(\Gamma)}\\
    &\lesssim\norm{\jump{\Phi}}{L_2(\edges_\mesh)}\norm{\vv}{\bH^{1/2}(\Gamma)}.
    \end{split}
  \end{align}
  In the last step we used an inverse estimate, cf.~\cite[Proposition 3.1]{cp07} and the recent extension~\cite[Proposition 5]{affkp13-A}, and the fact that $I_\mesh$ is bounded in $\bH^{1/2}(\Gamma)$, which again follows by interpolation,
  this time using the estimates~\eqref{eq:I:stab}. The third part on the right-hand side of~\eqref{eq:lem:I:1} can be bounded by Lemma~\ref{lem:poinc} and
  the $\bH^{1/2}(\Gamma)$-boundedness of $I_\mesh$ via
  \begin{align}\label{eq:lem:I:6}
    \int_{\edges_\mesh}\jump{\Phi}\avg{I_\mesh\vv} \lesssim \norm{\jump{\Phi}}{L_2(\edges_\mesh)} \norm{\vv}{\bH^{1/2}(\Gamma)}.
  \end{align}
  From the identity~\eqref{eq:lem:I:1} we conclude, using~\eqref{eq:lem:I:2},~\eqref{eq:lem:I:3}, and~\eqref{eq:lem:I:6}, that
  \begin{align*}
    \norm{\nabla_\mesh(1-I_\mesh)\Phi}{\wilde\bH^{-1/2}(\Gamma)} &= \sup_{\norm{\vv}{\bH^{1/2}(\Gamma)}=1} \dual{\nabla_\mesh(1-I_\mesh)\Phi}{\vv}
    \lesssim\norm{\jump{\Phi}}{L_2(\edges_\mesh)}.
  \end{align*}
  From this,~\eqref{eq:I:neg:1} follows from a Poincar\'e inequality, which may be used since $\Phi\in\CR_\mesh$ implies that the jump $\jump{\Phi}$ vanishes at the midpoint
  of every element.
  
  To prove~\eqref{eq:I:neg:2}, we again use integration by parts piecewise and conclude as before
  \begin{align}\label{eq:lem:I:8}
    \begin{split}
      \dual{\nabla_{\wat\mesh}(1-I_\mesh)\wat\Phi}{\vv} &= \dual{\nabla_{\wat\mesh}(1-I_\mesh)\wat\Phi}{\vv-I_\mesh\vv} - \dual{(1-I_\mesh)\wat\Phi}{\div I_\mesh\vv},\\
      &\qquad+ \int_{\edges_{\wat\mesh}}\jump{\wat\Phi}\avg{I_\mesh\vv}.
    \end{split}
  \end{align}
  The first and second term can be bounded as in~\eqref{eq:lem:I:4} and~\eqref{eq:lem:I:3},
  this time using the local estimates
  \begin{align*}
    \norm{\nabla_{\wat\mesh}(1-I_\mesh)\wat\Phi}{\bL_2(\el)} \leq C \norm{(1-\Pi_\mesh)\nabla_{\wat\mesh}\wat\Phi}{\bL_2(\omega_\el)}\\
    \norm{(1-I_\mesh)\wat\Phi}{L_2(\el)} \leq C h_\mesh|_\el\norm{(1-\Pi_\mesh)\nabla_{\wat\mesh}\wat\Phi}{\bL_2(\omega_\el)},
  \end{align*}
  which follow from a scaling argument and norm equivalence in finite dimensional spaces.
  The last term in~\eqref{eq:lem:I:8} can be bounded by~\eqref{eq:poinc:2} of Lemma~\ref{lem:poinc}.
\end{proof}
We will also need the following boundedness result for $I_\mesh$.
\begin{lemma}\label{lem:I:stab}
  In addition to Lemma~\ref{lem:I}, we have the following estimate, where $\wat\mesh$ denotes the uniform refinement of $\mesh$: For
  $\Phi\in\CR_\mesh$ and $\wat\Phi\in\CR_{\wat\mesh}$, it holds that
  \begin{equation} 
      \norm{\nabla_{\mesh}I_\mesh \widehat\Phi}{\wilde\bH^{-1/2}(\Gamma)} \leq \c{I} \norm{\nabla_{\widehat\mesh}\widehat \Phi}{\wilde\bH^{-1/2}(\Gamma)}.\label{eq:I:disc:stab:1}
  \end{equation}
\end{lemma}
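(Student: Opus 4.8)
The plan is to reduce the stability bound to the approximation estimate \eqref{eq:I:neg:2} of Lemma~\ref{lem:I:neg}, closing the argument with a negative-order inverse estimate. First I would note that $I_\mesh\wat\Phi\in\FE_\mesh$ is globally continuous and $\mesh$-piecewise affine, hence also $\wat\mesh$-piecewise affine, so its broken gradient coincides with its weak gradient and in particular $\nabla_\mesh I_\mesh\wat\Phi=\nabla_{\wat\mesh}I_\mesh\wat\Phi$. Writing
\[
  \nabla_\mesh I_\mesh\wat\Phi=\nabla_{\wat\mesh}\wat\Phi-\nabla_{\wat\mesh}(1-I_\mesh)\wat\Phi
\]
and applying the triangle inequality in $\wilde\bH^{-1/2}(\Gamma)$ gives
\[
  \norm{\nabla_\mesh I_\mesh\wat\Phi}{\wilde\bH^{-1/2}(\Gamma)}\leq\norm{\nabla_{\wat\mesh}\wat\Phi}{\wilde\bH^{-1/2}(\Gamma)}+\norm{\nabla_{\wat\mesh}(1-I_\mesh)\wat\Phi}{\wilde\bH^{-1/2}(\Gamma)}.
\]

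Next I would bound the second summand directly by \eqref{eq:I:neg:2}, obtaining $\norm{\nabla_{\wat\mesh}(1-I_\mesh)\wat\Phi}{\wilde\bH^{-1/2}(\Gamma)}\leq\c{I}\norm{h_\mesh^{1/2}(1-\Pi_\mesh)\nabla_{\wat\mesh}\wat\Phi}{\bL_2(\Gamma)}$. Since $\Pi_\mesh$ is the $\bL_2$-orthogonal projection onto $[\PP^0(\mesh)]^2$ and $h_\mesh$ is constant on each $\el\in\mesh$, the complementary projection $(1-\Pi_\mesh)$ is norm-reducing elementwise, so I may discard it and keep the weight:
\[
  \norm{h_\mesh^{1/2}(1-\Pi_\mesh)\nabla_{\wat\mesh}\wat\Phi}{\bL_2(\Gamma)}\leq\norm{h_\mesh^{1/2}\nabla_{\wat\mesh}\wat\Phi}{\bL_2(\Gamma)}.
\]
The crucial step is then to convert this weighted $\bL_2$-norm of the $\wat\mesh$-piecewise-constant field $\nabla_{\wat\mesh}\wat\Phi\in[\PP^0(\wat\mesh)]^2$ back into its $\wilde\bH^{-1/2}$-norm. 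Here I would invoke the negative-order inverse estimate already used in the proof of Lemma~\ref{lem:I:neg}, cf.~\cite[Proposition 3.1]{cp07} and~\cite[Proposition 5]{affkp13-A}: for piecewise polynomials on a shape-regular mesh the local $h^{1/2}$-weighted $\bL_2$-norm is controlled by the $\wilde\bH^{-1/2}$-norm. Applying it on $\wat\mesh$ and using $h_\mesh\simeq h_{\wat\mesh}$, which holds because $\wat\mesh$ is the uniform bisec(3)-refinement of $\mesh$, yields
\[
  \norm{h_\mesh^{1/2}\nabla_{\wat\mesh}\wat\Phi}{\bL_2(\Gamma)}\lesssim\norm{h_{\wat\mesh}^{1/2}\nabla_{\wat\mesh}\wat\Phi}{\bL_2(\Gamma)}\lesssim\norm{\nabla_{\wat\mesh}\wat\Phi}{\wilde\bH^{-1/2}(\Gamma)}.
\]
Chaining the displays and absorbing the constants (which depend only on $\mesh_0$) into $\c{I}$ proves the claim.

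The main obstacle is precisely this inverse estimate on locally refined meshes; it is the only genuinely analytic ingredient and is the reason the cited extension~\cite{affkp13-A} is needed, whereas everything else is a triangle inequality together with a direct appeal to \eqref{eq:I:neg:2} and the elementary elementwise norm-reduction of $(1-\Pi_\mesh)$.
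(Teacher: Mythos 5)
Your proposal is correct and follows essentially the same route as the paper's own proof: triangle inequality after splitting off $(1-I_\mesh)\wat\Phi$, the approximation estimate \eqref{eq:I:neg:2}, dropping $(1-\Pi_\mesh)$ by its elementwise $L_2$-contraction property, and the negative-order inverse estimate $\norm{h_\mesh^{1/2}\nabla_{\wat\mesh}\wat\Phi}{\bL_2(\Gamma)}\lesssim\norm{\nabla_{\wat\mesh}\wat\Phi}{\wilde\bH^{-1/2}(\Gamma)}$. The only (inessential) discrepancy is the citation: the paper attributes this inverse estimate to \cite[Thm.~3.6]{ghs}, whereas the references \cite{cp07} and \cite{affkp13-A} you invoke were used in Lemma~\ref{lem:I:neg} for a different inverse estimate (controlling $\norm{h_\mesh^{1/2}\div I_\mesh\vv}{L_2(\Gamma)}$ by $\norm{\vv}{\bH^{1/2}(\Gamma)}$).
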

\begin{proof}
  To prove~\eqref{eq:I:disc:stab:1}, we first observe that due to the local $L_2$ boundedness of $(1-\Pi_\mesh)$ and the inverse estimate~\cite[Thm. 3.6]{ghs},
  we have
  \begin{align*}
    \norm{h_\mesh^{1/2}(1-\Pi_\mesh)\nabla_{\wat\mesh}\wat\Phi}{\bL_2(\Gamma)} \leq \norm{h_\mesh^{1/2}\nabla_{\wat\mesh}\wat\Phi}{\bL_2(\Gamma)}
    \lesssim \norm{\nabla_{\wat\mesh}\wat\Phi}{\wilde\bH^{-1/2}(\Gamma)}.
  \end{align*}
  Hence, the triangle inequality and~\eqref{eq:I:neg:2} show
  \begin{align*}
    \norm{\nabla_{\wat\mesh}I_\mesh\wat\Phi}{\wilde\bH^{-1/2}(\Gamma)} \leq \norm{\nabla_{\wat\mesh}\wat\Phi}{\wilde\bH^{-1/2}(\Gamma)} +
    \norm{\nabla_{\wat\mesh}(1-I_\mesh)\wat\Phi}{\wilde\bH^{-1/2}(\Gamma)} \lesssim \norm{\nabla_{\wat\mesh}\wat\Phi}{\wilde\bH^{-1/2}(\Gamma)}.
  \end{align*}
\end{proof}
\section{A posteriori error estimation and adaptive algorithm}\label{section:aposteriori}
In this section, we introduce different error estimators, and show their reliability and efficiency. In Section~\ref{section:global:est}, we introduce
global error estimators, that is, the employed (non-integer) norm is nonlocal and therefore does not provide information for local mesh-refinement.
In Section~\ref{section:local:est}, we pass over to weighted (integer) norms, which are local and can therefore be employed in an adaptive algorithm,
which will be introduced in Section~\ref{section:algorithm}.
In order to estimate the nonconformity of a function in terms of the function itself, we will use the
results of Sections~\ref{section:conform} and~\ref{section:interpolation}.
\begin{corollary}\label{cor:ncon}
  Denote by $\mesh$ a refinement of $\mesh_0$.
  Let $\Phi\in\CR_\mesh$ be the Galerkin solution~\eqref{eq:galerkin}. Then, there is a constant $\setc{nc}>0$ which depends only on $\mesh_0$ such that
  \begin{align*}
    \enorm{\Phi^\perp}_\mesh = \enorm{\Phi-\Phi^\CO}_\mesh \leq \c{nc} \norm{h_\mesh\jump{\Phi}'}{L_2(\edges_\mesh)}.
  \end{align*}
\end{corollary}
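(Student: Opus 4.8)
The plan is to bound the nonconformity error $\enorm{\Phi-\Phi^\CO}_\mesh = \norm{\scurl_\mesh(\Phi-\Phi^\CO)}{\wilde\bH^{-1/2}(\Gamma)}$ by passing through the Cl\'ement-type interpolant $I_\mesh\Phi$ and then invoking Lemma~\ref{lem:I:neg}. The key observation is that $\Phi^\CO\in\FE_\mesh$ is the $a_\mesh$-orthogonal projection of $\Phi$ onto the conforming space, so it is the \emph{best} conforming approximation in the energy norm. Since $I_\mesh\Phi\in\FE_\mesh$ is itself a conforming function (the Cl\'ement operator produces continuous piecewise linears vanishing on $\partial\Gamma$), we have the quasi-optimality
\begin{align*}
  \enorm{\Phi-\Phi^\CO}_\mesh \leq \c{norm}\,a_\mesh(\Phi-\Phi^\CO,\Phi-\Phi^\CO)^{1/2} \leq \c{norm}\,a_\mesh(\Phi-I_\mesh\Phi,\Phi-I_\mesh\Phi)^{1/2} \leq \c{norm}^2\enorm{\Phi-I_\mesh\Phi}_\mesh,
\end{align*}
using the norm equivalence between $a_\mesh$ and $\enorm{\cdot}_\mesh^2$ twice. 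This replaces the uncomputable $\Phi^\CO$ by the explicit interpolant at the cost of only the constant $\c{norm}$.

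Next I would rewrite $\enorm{\Phi-I_\mesh\Phi}_\mesh$ in terms of the gradient. The tangential curl and gradient differ only by a rotation of the vector field, which is an isometry on $\wilde\bH^{-1/2}(\Gamma)$, so $\norm{\scurl_\mesh(\Phi-I_\mesh\Phi)}{\wilde\bH^{-1/2}(\Gamma)} = \norm{\nabla_\mesh(\Phi-I_\mesh\Phi)}{\wilde\bH^{-1/2}(\Gamma)}$. At this point estimate~\eqref{eq:I:neg:1} of Lemma~\ref{lem:I:neg} applies directly and gives
\begin{align*}
  \norm{\nabla_\mesh(1-I_\mesh)\Phi}{\wilde\bH^{-1/2}(\Gamma)} \leq \c{I}\norm{h_\mesh\jump{\Phi}'}{L_2(\edges_\mesh)}.
\end{align*}
Chaining these inequalities yields the claim with $\c{nc} = \c{norm}^2\c{I}$, a constant depending only on $\mesh_0$ since both $\c{norm}$ and $\c{I}$ do.

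The only genuine point requiring care is the first step: verifying that $I_\mesh\Phi$ indeed lands in $\FE_\mesh$ and hence is admissible as a competitor for the best-approximation $\Phi^\CO$. The definition~\eqref{eq:I} produces a continuous piecewise linear function, and since $\Phi\in\CR_\mesh$ vanishes at interior-edge midpoints and along the boundary in the weak averaged sense used by the Cl\'ement construction, the nodal values $\psi(z)$ are only taken over interior nodes $z\in\nodes_\mesh$, so $I_\mesh\Phi$ vanishes on $\partial\Gamma$ and lies in $H^1_0(\Gamma)$. Thus $I_\mesh\Phi\in\PP^1(\mesh)\cap H^1_0(\Gamma)=\FE_\mesh$, and the quasi-optimality of $\Phi^\CO$ against it is justified. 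Everything else is a direct application of already-established lemmas, so I expect no serious obstacle beyond bookkeeping of the constants.
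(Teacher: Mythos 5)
Your proof is correct and follows essentially the same route as the paper's: the paper's one-line argument ($a_\mesh$-orthogonality of $\Phi-\Phi^\CO$ to $\FE_\mesh$ combined with~\eqref{eq:I:neg:1}) is precisely your best-approximation step with $I_\mesh\Phi\in\FE_\mesh$ as competitor, followed by the curl/gradient identification and Lemma~\ref{lem:I:neg}. Your explicit verification that $I_\mesh\Phi\in\FE_\mesh$ and that the rotation relating $\scurl_\mesh$ to $\nabla_\mesh$ preserves the $\wilde\bH^{-1/2}(\Gamma)$-norm simply fills in details the paper leaves implicit.
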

\begin{proof}
  This follows easily by using the fact that $\Phi-\Phi^\CO$ is $a_\mesh$-orthogonal to $\FE_\mesh$ and employing~\eqref{eq:I:neg:1}.
\end{proof}
\subsection{Global error estimators}\label{section:global:est}
Let $\Phi\in\CR_\mesh$ and $\widehat\Phi\in\CR_{\wat\mesh}$ be Galerkin solutions~\eqref{eq:galerkin}, where $\widehat\mesh$ is a uniform refinement of $\mesh$.
We introduce estimators on the mesh $\mesh$ by
\begin{align*}
  \eta_\mesh &:= \enorm{\wat\Phi-\Phi}_{\wat\mesh} = \norm{\scurl_{\wat\mesh}(\wat\Phi-\Phi)}{\wilde\bH^{-1/2}(\Gamma)},\\
  \wilde\eta_\mesh &:= \enorm{\wat\Phi-I_\mesh\wat\Phi}_{\wat\mesh} = \norm{\scurl_{\wat\mesh}(\wat\Phi-I_\mesh \wat\Phi)}{\wilde\bH^{-1/2}(\Gamma)}.
\end{align*}
The existing derivations of $h-h/2$ error estimators, e.g.~\cite{effp09,fp08}, focus on conforming methods and rely mostly on the Galerkin orthogonality~\eqref{eq:orth}.
Contrary, we have the weaker partial orthogonality of Lemma~\ref{lem:orth}, where additional terms arise (what we called nonconformity error) which account for the nonconformity.
In Corollary~\ref{cor:ncon}, we showed that these terms can be bounded by the inter-element jumps of $\Phi$, i.e., by
\begin{align*}
  \ncon_\mesh := \norm{h_\mesh\jump{\Phi}'}{L_2(\edges_\mesh)}.
\end{align*}

Consequently, we have that $\eta_\mesh$ and $\wilde\eta_\mesh$ are equivalent up to $\ncon_\mesh$.
\begin{lemma}\label{lem:eta:etatilde}
  Let $\mesh$ be a refinement of $\mesh_0$. Then, there is a constant $\setc{eta:etatilde}>0$ which depends only on $\mesh_0$ such that
  \begin{align*}
    \c{eta:etatilde}^{-1}\enorm{\wat\Phi-\Phi}_{\wat\mesh} \leq \enorm{\wat\Phi-I_\mesh\wat\Phi}_{\wat\mesh} + \ncon_\mesh \quad\text{ and }
    \quad \c{eta:etatilde}^{-1}\enorm{\wat\Phi-I_\mesh\wat\Phi}_{\wat\mesh} \leq \enorm{\wat\Phi-\Phi}_{\wat\mesh} + \ncon_\mesh.
  \end{align*}
\end{lemma}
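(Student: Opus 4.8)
The plan is to prove the two stated inequalities separately, reading them as the two halves of the equivalence $\eta_\mesh\sim\wilde\eta_\mesh$ modulo $\ncon_\mesh$. Throughout I will use that $\scurl$ and $\nabla$ differ only by a fixed rotation in the plane, which is an isometry of $\wilde\bH^{-1/2}(\Gamma)$ preserving the tangentiality constraint; hence $\enorm{\Psi}_\mesh=\norm{\nabla_\mesh\Psi}{\wilde\bH^{-1/2}(\Gamma)}$ and the estimates of Lemma~\ref{lem:I:neg} apply verbatim with $\scurl$ replaced by $\nabla$. I also record two structural facts. First, $\FE_\mesh\subset\CR_\mesh\cap\CR_{\wat\mesh}$, so $\FE_\mesh$ functions are legitimate test functions on both meshes. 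Second, since $\Phi$ is $\mesh$-piecewise affine we have $\scurl_{\wat\mesh}\Phi=\scurl_\mesh\Phi$, so testing~\eqref{eq:galerkin} on $\mesh$ and on $\wat\mesh$ with the same $\Psi\in\FE_\mesh$ and subtracting yields the partial orthogonality $a_{\wat\mesh}(\wat\Phi-\Phi,\Psi)=0$ for all $\Psi\in\FE_\mesh$ (cf.\ Section~\ref{section:conform}).

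For the second inequality I will in fact prove the stronger $\wilde\eta_\mesh\lesssim\eta_\mesh$. Writing $\wat\Phi-I_\mesh\wat\Phi=(1-I_\mesh)\wat\Phi$ and invoking~\eqref{eq:I:neg:2} gives $\wilde\eta_\mesh\leq\c{I}\norm{h_\mesh^{1/2}(1-\Pi_\mesh)\nabla_{\wat\mesh}\wat\Phi}{\bL_2(\Gamma)}$. The key observation is that $\nabla_{\wat\mesh}\Phi$ is $\mesh$-piecewise constant, hence annihilated by $(1-\Pi_\mesh)$, so that $(1-\Pi_\mesh)\nabla_{\wat\mesh}\wat\Phi=(1-\Pi_\mesh)\nabla_{\wat\mesh}(\wat\Phi-\Phi)$. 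Estimating $(1-\Pi_\mesh)$ by the identity in $\bL_2(\Gamma)$ and then applying the inverse estimate $\norm{h_\mesh^{1/2}\,\cdot\,}{\bL_2(\Gamma)}\lesssim\norm{\,\cdot\,}{\wilde\bH^{-1/2}(\Gamma)}$ (valid for the $\wat\mesh$-piecewise constant $\nabla_{\wat\mesh}(\wat\Phi-\Phi)$, with $h_\mesh\simeq h_{\wat\mesh}$) turns the right-hand side into $\norm{\nabla_{\wat\mesh}(\wat\Phi-\Phi)}{\wilde\bH^{-1/2}(\Gamma)}=\enorm{\wat\Phi-\Phi}_{\wat\mesh}=\eta_\mesh$.

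For the first inequality I would use the partial orthogonality to obtain a quasi-best approximation property. Since $\slo$ is symmetric and $\wilde\bH^{-1/2}(\Gamma)$-elliptic, $a_{\wat\mesh}$ is a symmetric positive semidefinite form with $a_{\wat\mesh}(\Xi,\Xi)\simeq\enorm{\Xi}_{\wat\mesh}^2$, and Cauchy--Schwarz applies. As $\wat\Phi-\Phi$ is $a_{\wat\mesh}$-orthogonal to $\FE_\mesh$, for every $\Psi\in\FE_\mesh$ we have $a_{\wat\mesh}(\wat\Phi-\Phi,\wat\Phi-\Phi)=a_{\wat\mesh}(\wat\Phi-\Phi,(\wat\Phi-\Phi)-\Psi)$, whence $\eta_\mesh\lesssim\enorm{(\wat\Phi-\Phi)-\Psi}_{\wat\mesh}$. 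I then choose $\Psi:=I_\mesh\wat\Phi-\Phi^\CO\in\FE_\mesh$ (legitimate, both summands being conforming), for which $(\wat\Phi-\Phi)-\Psi=(\wat\Phi-I_\mesh\wat\Phi)-\Phi^\perp$. The triangle inequality together with Corollary~\ref{cor:ncon}, i.e.\ $\enorm{\Phi^\perp}_\mesh\leq\c{nc}\ncon_\mesh$ (and $\enorm{\Phi^\perp}_{\wat\mesh}=\enorm{\Phi^\perp}_\mesh$, since $\Phi^\perp$ is $\mesh$-piecewise affine), gives $\eta_\mesh\lesssim\wilde\eta_\mesh+\ncon_\mesh$. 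Combining both inequalities and letting $\c{eta:etatilde}$ be the larger of the two constants completes the proof.

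The main obstacle, and the reason a naive bridge through $\enorm{\Phi-I_\mesh\wat\Phi}_{\wat\mesh}$ fails, is that $\wat\Phi-\Phi$ is neither conforming nor a Crouzeix--Raviart function on $\wat\mesh$: the midpoint continuity of $\Phi$ on coarse edges is lost under refinement, so $\jump{\wat\Phi-\Phi}$ no longer has vanishing edge means and neither Lemma~\ref{lem:I:stab} nor~\eqref{eq:I:neg} can be applied to $\wat\Phi-\Phi$ directly. This obstruction forces the two asymmetric devices above, namely the gradient-oscillation identity for the direction $\wilde\eta_\mesh\lesssim\eta_\mesh$ and the orthogonality-based quasi-best approximation with the conforming comparison function $I_\mesh\wat\Phi-\Phi^\CO$ for the reverse direction.
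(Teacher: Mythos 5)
Your proof is correct, and it splits into one half that coincides with the paper and one half that takes a genuinely different route. For the first inequality you argue exactly as the paper does: partial orthogonality $a_{\wat\mesh}(\wat\Phi-\Phi,\Psi)=0$ for $\Psi\in\FE_\mesh$, quasi-best approximation via Cauchy--Schwarz, the conforming comparison function $I_\mesh\wat\Phi-\Phi^\CO\in\FE_\mesh$, the triangle inequality, and Corollary~\ref{cor:ncon}; the paper's display $\enorm{\wat\Phi-\Phi}_{\wat\mesh}\lesssim\enorm{\wat\Phi-\Phi+\Phi^\CO-I_\mesh\wat\Phi}$ is literally your choice of $\Psi$. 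For the second inequality the paper proceeds differently: it writes $\wat\Phi-I_\mesh\wat\Phi=(1-I_\mesh)(\wat\Phi-\Phi^\CO)$ using the projection property of $I_\mesh$ (note that $\wat\Phi-\Phi^\CO\in\CR_{\wat\mesh}$, since one subtracts a \emph{conforming} function, so the Crouzeix--Raviart structure survives), then invokes the discrete stability estimate~\eqref{eq:I:disc:stab:1} of Lemma~\ref{lem:I:stab}, and finishes with the triangle inequality and Corollary~\ref{cor:ncon}, obtaining $\wilde\eta_\mesh\lesssim\eta_\mesh+\ncon_\mesh$. You instead chain~\eqref{eq:I:neg:2} with the observation that $\nabla_{\wat\mesh}\Phi$ is $\mesh$-piecewise constant (so $(1-\Pi_\mesh)$ annihilates it), the element-wise $L_2$-contractivity of $1-\Pi_\mesh$, and the inverse estimate of \cite[Theorem~3.6]{ghs} applied to the $\wat\mesh$-piecewise constant $\nabla_{\wat\mesh}(\wat\Phi-\Phi)$; this yields the strictly stronger bound $\wilde\eta_\mesh\lesssim\eta_\mesh$ with no nonconformity term. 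That chain is exactly the one the paper deploys later, in Lemma~\ref{lem:mu} (estimates~\eqref{eq:est:equivalence:1} and~\eqref{eq:est:equivalence:2}), so in effect you front-load Lemma~\ref{lem:mu} and render Lemma~\ref{lem:I:stab} unnecessary for this statement; what the paper's arrangement buys is a short, symmetric proof here and a clean separation of the $\Pi_\mesh$/inverse-estimate machinery into the localization step. Your closing diagnosis of the obstruction---that $\wat\Phi-\Phi\notin\CR_{\wat\mesh}$ because jumps of $\Phi$ across coarse edges need not vanish at the quarter points, so neither Lemma~\ref{lem:I:stab} nor Lemma~\ref{lem:I:neg} applies to $\wat\Phi-\Phi$ directly---is accurate and is precisely why both proofs route through a conforming intermediary.
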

\begin{proof}
  As $\wat\Phi-\Phi$ is orthogonal to $\FE_\mesh$ in $a_{\wat\mesh}$, we conclude
  \begin{align*}
    \enorm{\wat\Phi-\Phi}_{\wat\mesh} \lesssim \enorm{\wat\Phi-\Phi+\Phi^0-I_\mesh\wat\Phi} \leq \enorm{\wat\Phi-I_\mesh\wat\Phi}_{\wat\mesh} + \enorm{\Phi-\Phi^0}_{\mesh},
  \end{align*}
  and the last term can be bounded by $\ncon_\mesh$ by Corollary~\ref{cor:ncon}.
  To see the second estimate,
  we use the projection property and boundedness~\eqref{eq:I:disc:stab:1} of $I_\mesh$ to see that
  \begin{align*}
    \enorm{\wat\Phi-I_\mesh\wat\Phi}_{\wat\mesh} \lesssim \enorm{\wat\Phi-\Phi^\CO}_{\wat\mesh} \leq \enorm{\wat\Phi-\Phi}_{\wat\mesh} + \enorm{\Phi-\Phi^\CO}_{\mesh},
  \end{align*}
  which shows the desired estimate.
\end{proof}
In a next step, we show the efficiency and reliability of $\eta_\mesh$. For the reliability, we assume that a saturation
assumption for the \textit{conforming} approximations holds true.
\begin{theorem}\label{thm:eta:eff:rel}
  Let $\mesh$ be a refinement of $\mesh_0$. Then, there is a constant $\setc{eff}>0$ such that $\eta_\mesh = \enorm{\wat\Phi-\Phi}_{\wat\mesh}$ is efficient up to the nonconformity error, i.e.,
  \begin{align}\label{eq:eta:eff}
    \c{eff}^{-1}\enorm{\wat\Phi-\Phi}_{\wat\mesh} \leq \enorm{\phi-\Phi}_{\mesh} + \ncon_\mesh + \ncon_{\wat\mesh}.
  \end{align}
  Furthermore, assume that there is a constant $\setc{sat}\in(0,1)$ such that the saturation assumption for the conforming approximations
  \begin{align}\label{eq:sat}
    a_{\wat\mesh}(\phi-\wat\Phi^\CO,\phi-\wat\Phi^\CO) \leq \c{sat} a_{\mesh}(\phi-\Phi^\CO,\phi-\Phi^\CO)
  \end{align}
  holds true. Then, there is a constant $\setc{rel}>0$ such that $\eta_\mesh = \enorm{\wat\Phi-\Phi}_{\wat\mesh}$ is reliable up to $\ncon_\mesh+\ncon_{\wat\mesh}$, i.e.,
  \begin{align}\label{eq:eta:rel}
    \c{rel}^{-1}\enorm{\phi-\Phi}_{\mesh} \leq \enorm{\wat\Phi-\Phi}_{\wat\mesh} + \ncon_\mesh + \ncon_{\wat\mesh}
  \end{align}
  holds true.
\end{theorem}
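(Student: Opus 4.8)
The plan is to reduce both assertions to statements about the \emph{conforming} Galerkin solutions $\Phi^\CO\in\FE_\mesh$ and $\wat\Phi^\CO\in\FE_{\wat\mesh}$, absorbing every non-conforming remainder into $\ncon_\mesh$ and $\ncon_{\wat\mesh}$ via Corollary~\ref{cor:ncon}. Two preliminary facts are used throughout. First, any $\Psi\in\PP^1(\mesh)$ is piecewise affine on $\wat\mesh$ as well, so $\scurl_{\wat\mesh}\Psi=\scurl_\mesh\Psi$ and hence $\enorm{\Psi}_{\wat\mesh}=\enorm{\Psi}_\mesh$ for every $\Psi\in\CR_\mesh$; in particular $\Phi$ and its parts may be measured on either mesh. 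Second, combining the partial orthogonality $a_\mesh(\phi-\Phi,\Psi)=0$ for $\Psi\in\FE_\mesh$ with the defining relation $a_\mesh(\Phi-\Phi^\CO,\Psi)=0$ of the $a_\mesh$-projection yields $a_\mesh(\phi-\Phi^\CO,\Psi)=0$ for all $\Psi\in\FE_\mesh$; that is, $\Phi^\CO$ is the conforming Galerkin solution on $\mesh$, and likewise $\wat\Phi^\CO$ on $\wat\mesh$. This identification is exactly what makes the saturation assumption~\eqref{eq:sat} and a Pythagorean identity applicable.

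For efficiency~\eqref{eq:eta:eff} I would start from $\enorm{\wat\Phi-\Phi}_{\wat\mesh}\leq\enorm{\phi-\wat\Phi}_{\wat\mesh}+\enorm{\phi-\Phi}_\mesh$, the second summand being already admissible. For the first I apply Lemma~\ref{lem:orth} with $\mesh_\star=\wat\mesh$, $\Phi_\star=\wat\Phi$, $\Phi=\Phi$ and a fixed $\varepsilon$ (say $\varepsilon=1$), discard the non-positive term $-\tfrac{\c{norm}^{-2}}{2}\enorm{\Phi-\wat\Phi}^2_{\wat\mesh}$, and bound the cross term $\enorm{(\Phi-\Phi^\CO)-(\wat\Phi-\wat\Phi^\CO)}_{\wat\mesh}\leq\enorm{\Phi-\Phi^\CO}_\mesh+\enorm{\wat\Phi-\wat\Phi^\CO}_{\wat\mesh}$, whose two pieces are controlled by $\c{nc}\ncon_\mesh$ and $\c{nc}\ncon_{\wat\mesh}$ through Corollary~\ref{cor:ncon} (applied on $\mesh$ and on $\wat\mesh$). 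Passing between $a$ and $\enorm{\cdot}^2$ by the norm equivalence and using $\sqrt{x^2+y^2}\leq x+y$ gives $\enorm{\phi-\wat\Phi}_{\wat\mesh}\lesssim\enorm{\phi-\Phi}_\mesh+\ncon_\mesh+\ncon_{\wat\mesh}$, which combines with the triangle inequality to yield~\eqref{eq:eta:eff}.

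For reliability~\eqref{eq:eta:rel} I first peel off the non-conformity, $\enorm{\phi-\Phi}_\mesh\leq\enorm{\phi-\Phi^\CO}_\mesh+\enorm{\Phi^\NC}_\mesh$, the last term being $\leq\c{nc}\ncon_\mesh$ by Corollary~\ref{cor:ncon}. It then suffices to bound the conforming error $\enorm{\phi-\Phi^\CO}$ by the estimator. Here the classical $h$--$h/2$ argument applies to the conforming solutions: since $\Phi^\CO\in\FE_\mesh\subset\FE_{\wat\mesh}$ and $\wat\Phi^\CO$ is the $a$-orthogonal projection of $\phi$ onto $\FE_{\wat\mesh}$, the Pythagorean identity $a(\phi-\Phi^\CO,\phi-\Phi^\CO)=a(\phi-\wat\Phi^\CO,\phi-\wat\Phi^\CO)+a(\wat\Phi^\CO-\Phi^\CO,\wat\Phi^\CO-\Phi^\CO)$ together with~\eqref{eq:sat} gives $(1-\c{sat})\,a(\phi-\Phi^\CO,\phi-\Phi^\CO)\leq a(\wat\Phi^\CO-\Phi^\CO,\wat\Phi^\CO-\Phi^\CO)$, hence $\enorm{\phi-\Phi^\CO}\lesssim(1-\c{sat})^{-1/2}\enorm{\wat\Phi^\CO-\Phi^\CO}_{\wat\mesh}$. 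Finally I write $\wat\Phi^\CO-\Phi^\CO=(\wat\Phi-\Phi)-(\wat\Phi^\NC-\Phi^\NC)$ and estimate $\enorm{\wat\Phi^\CO-\Phi^\CO}_{\wat\mesh}\leq\enorm{\wat\Phi-\Phi}_{\wat\mesh}+\enorm{\wat\Phi^\NC}_{\wat\mesh}+\enorm{\Phi^\NC}_\mesh\leq\enorm{\wat\Phi-\Phi}_{\wat\mesh}+\c{nc}(\ncon_\mesh+\ncon_{\wat\mesh})$, again by Corollary~\ref{cor:ncon}; chaining the inequalities produces~\eqref{eq:eta:rel}.

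The routine content is the bookkeeping with triangle inequalities and the norm equivalence. The genuinely load-bearing steps are the two non-conforming substitutes for Galerkin orthogonality: for efficiency, Lemma~\ref{lem:orth}, whose cross term is precisely what forces both $\ncon_\mesh$ and $\ncon_{\wat\mesh}$ to appear; for reliability, the recognition that saturation is imposed on the \emph{conforming} approximations, so the whole estimate must be funneled through the identification of $\Phi^\CO,\wat\Phi^\CO$ as conforming Galerkin solutions before the standard Pythagorean $h$--$h/2$ reasoning can be invoked. I expect the main obstacle to be notational vigilance rather than a deep difficulty: one must keep careful track of which mesh each energy norm refers to, exploiting $\enorm{\Psi}_{\wat\mesh}=\enorm{\Psi}_\mesh$ for $\Psi\in\CR_\mesh$, so that Corollary~\ref{cor:ncon}, stated on a single mesh, is applied on the correct mesh at each occurrence.
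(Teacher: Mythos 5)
Your proof is correct and follows essentially the same route as the paper: efficiency via Lemma~\ref{lem:orth} with $\mesh_\star=\wat\mesh$ combined with Corollary~\ref{cor:ncon}, and reliability by identifying $\Phi^\CO,\wat\Phi^\CO$ as the conforming Galerkin solutions and running the Pythagoras/saturation argument, with all nonconforming remainders absorbed through Corollary~\ref{cor:ncon}. The only immaterial deviation is in the efficiency step, where you discard the negative term $-\tfrac{\c{norm}^{-2}}{2}\enorm{\Phi-\wat\Phi}_{\wat\mesh}^2$ and recover the estimator by a triangle inequality, whereas the paper keeps that term on the left-hand side to bound $\enorm{\wat\Phi-\Phi}_{\wat\mesh}$ directly.
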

\begin{proof}
  Efficiency~\eqref{eq:eta:eff} follows immediately from Lemma~\ref{lem:orth} by setting $\mesh_\star := \widehat\mesh$ and Corollary~\ref{cor:ncon}.

  To show reliability~\eqref{eq:eta:rel}, we first note that the triangle inequality and ellipticity give
  \begin{align*}
    \enorm{\phi-\Phi} \lesssim a(\phi-\Phi^\CO,\phi-\Phi^\CO) + \ncon_\mesh.
  \end{align*}
  Now, due to the conforming orthogonality and the saturation assumption~\eqref{eq:sat},
  \begin{align*}
    (1-\c{sat}) a(\phi-\Phi^\CO,\phi-\Phi^\CO) &\leq a(\Phi^\CO-\wat\Phi^\CO,\Phi^\CO-\wat\Phi^\CO) \lesssim \enorm{\Phi^\CO-\wat\Phi^\CO}_{\wat\mesh}\\
    &\leq \enorm{\wat\Phi-\Phi}_{\wat\mesh} + \enorm{\Phi-\Phi^\CO} + \enorm{\wat\Phi-\wat\Phi^\CO}\\
    &\lesssim \enorm{\wat\Phi-\Phi}_{\wat\mesh} + \ncon_\mesh + \ncon_{\wat\mesh},
  \end{align*}
  where we used the triangle inequality and Corollary~\ref{cor:ncon}.
\end{proof}
\begin{remark}
  In finite element methods, the saturation assumption~\eqref{eq:sat} is verified for the Poisson problem $-\Delta u = f$. In fact, in~\cite{dn02} it is shown that
  \begin{align*}
    \enorm{\phi-\widehat\Phi^0_\ell} \leq \c{sat} \enorm{\phi-\Phi^0_\ell} + \osc_\ell,
  \end{align*}
  where $\osc_\ell$ is a measure for the resolution of $f$ on the mesh $\mesh_\ell$. Hence, small data oscillation implies the saturation assumption. However, the saturation
  assumption~\eqref{eq:sat} is not proven for BEM. To the best of the our knowledge, the only contributions are~\cite{affkp13,eh06}. In~\cite{affkp13}, it is shown that
  for $2D$-BEM for the weakly singular integral equation,
  there is a $k\in\mathbb{N}$ and $\c{sat}<1$ which depend only on $\mesh_0$ and $\Gamma$, such that with $k$ uniform refinements of $\mesh_\ell$, which we denote by $\mesh_{\ell(k)}$, there holds
  \begin{align*}
    \enorm{\phi-\Phi_\ell} \leq \c{sat}\enorm{\phi-\Phi_{\ell(k)}} + \osc_\ell
  \end{align*}
  with $\osc_\ell$ being a term of higher order than the others. In~\cite{eh06}, the saturation assumption is analyzed for an edge singularity on a plane square-shaped domain, and uniform as well as
  graded meshes are considered.
\end{remark}
\subsection{Localized error estimators}\label{section:local:est}
The a posteriori estimators of Section~\ref{section:global:est} use the $\wilde\bH^{-1/2}(\Gamma)$-norm, which is 
hard to compute and nonlocal. In order to provide a posteriori error estimators which can be split into element-wise indicators,
we will use a weighted $\bL_2$-norm. We introduce the localized estimators
\begin{align*}
  \mu_\mesh &:= \norm{h_\mesh^{1/2}\scurl_{\wat\mesh}(\wat\Phi-\Phi)}{\bL_2(\Gamma)},\\
  \wilde\mu_\mesh &:= \norm{h_\mesh^{1/2}(\scurl_{\wat\mesh}\wat\Phi - \Pi_\mesh \scurl_{\wat\mesh}\wat\Phi)}{\bL_2(\Gamma)}.
\end{align*}
Then we have the following result.
\begin{lemma}\label{lem:mu}
  There holds
\begin{subequations}
  \begin{align}\label{eq:est:equivalence:1}
    \norm{h_\mesh^{1/2}(\scurl_{\wat\mesh}\wat\Phi - \Pi_\mesh \scurl_{\wat\mesh}\wat\Phi)}{\bL_2(\Gamma)}
    \leq \norm{h_\mesh^{1/2}\scurl_{\wat\mesh}(\wat\Phi-\Phi)}{\bL_2(\Gamma)}
    \lesssim \enorm{\wat\Phi-\Phi}_{\wat\mesh}
  \end{align}
    and
  \begin{align}\label{eq:est:equivalence:2}
    \enorm{\wat\Phi-I_\mesh\wat\Phi}_{\wat\mesh} \lesssim \norm{h_\mesh^{1/2}(\scurl_{\wat\mesh}\wat\Phi - \Pi_\mesh \scurl_{\wat\mesh}\wat\Phi)}{\bL_2(\Gamma)}.
  \end{align}
\end{subequations}
  In particular, all estimators are equivalent up to $\ncon_\mesh + \ncon_{\wat\mesh}$, and for
  $\tau\in\left\{ \eta_\mesh,\wilde\eta_\mesh,\mu_\mesh,\wilde\mu_\mesh \right\}$,
  the estimator $\tau$ is reliable and efficient up to $\ncon_\mesh + \ncon_{\wat\mesh}$, i.e.,
  \begin{align*}
    \enorm{\phi-\Phi} &\lesssim \tau + \ncon_\mesh + \ncon_{\wat\mesh},\\
    \tau &\lesssim \enorm{\phi-\Phi} + \ncon_\mesh + \ncon_{\wat\mesh}.
  \end{align*}
\end{lemma}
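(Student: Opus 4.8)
The plan is to establish the two chains of inequalities \eqref{eq:est:equivalence:1} and \eqref{eq:est:equivalence:2}, and then assemble the final reliability/efficiency statement by combining these with the results already proved for $\eta_\mesh$ and $\wilde\eta_\mesh$. For the first inequality in \eqref{eq:est:equivalence:1}, I would observe that $\Pi_\mesh$ is the $\bL_2$-orthogonal projection onto $[\PP^0(\mesh)]^2$, so $\Pi_\mesh$ is the best $\bL_2$-approximation; since $\Phi\in\CR_\mesh$ means $\scurl_\mesh\Phi\in[\PP^0(\mesh)]^2$ is one admissible piecewise constant, we get $\norm{h_\mesh^{1/2}(\scurl_{\wat\mesh}\wat\Phi-\Pi_\mesh\scurl_{\wat\mesh}\wat\Phi)}{\bL_2(\Gamma)}\leq\norm{h_\mesh^{1/2}(\scurl_{\wat\mesh}\wat\Phi-\scurl_\mesh\Phi)}{\bL_2(\Gamma)}$. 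Here one must be slightly careful that $\scurl_{\wat\mesh}\Phi=\scurl_\mesh\Phi$ as piecewise-constant fields (refining the mesh does not change the curl of a fixed piecewise-affine function), so the middle quantity is exactly $\norm{h_\mesh^{1/2}\scurl_{\wat\mesh}(\wat\Phi-\Phi)}{\bL_2(\Gamma)}$, giving the first inequality with constant one.

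For the second inequality in \eqref{eq:est:equivalence:1}, I would use the inverse-type estimate that relates the weighted $\bL_2$-norm to the $\wilde\bH^{-1/2}(\Gamma)$-norm on discrete functions. Concretely, $\scurl_{\wat\mesh}(\wat\Phi-\Phi)$ is a $\wat\mesh$-piecewise constant field, and by the inverse estimate already invoked in the excerpt (\cite[Thm.~3.6]{ghs}, as used in the proof of Lemma~\ref{lem:I:stab}) one has $\norm{h_\mesh^{1/2}\psi}{\bL_2(\Gamma)}\lesssim\norm{\psi}{\wilde\bH^{-1/2}(\Gamma)}$ for such discrete $\psi$; applying this with $\psi=\scurl_{\wat\mesh}(\wat\Phi-\Phi)$ yields $\norm{h_\mesh^{1/2}\scurl_{\wat\mesh}(\wat\Phi-\Phi)}{\bL_2(\Gamma)}\lesssim\enorm{\wat\Phi-\Phi}_{\wat\mesh}$, which is exactly the claimed bound. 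The mesh-width weight $h_\mesh$ rather than $h_{\wat\mesh}$ is harmless since the two differ by a fixed factor.

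For \eqref{eq:est:equivalence:2} I would exploit the projection property of $I_\mesh$ together with estimate \eqref{eq:I:neg:2} of Lemma~\ref{lem:I:neg}, which is tailored precisely for this: since $\enorm{\wat\Phi-I_\mesh\wat\Phi}_{\wat\mesh}=\norm{\scurl_{\wat\mesh}(\wat\Phi-I_\mesh\wat\Phi)}{\wilde\bH^{-1/2}(\Gamma)}$ and $\scurl_{\wat\mesh}$ commutes with the gradient structure up to a rotation, I expect to bound this by $\norm{h_\mesh^{1/2}(1-\Pi_\mesh)\nabla_{\wat\mesh}\wat\Phi}{\bL_2(\Gamma)}$, and then identify $(1-\Pi_\mesh)\nabla_{\wat\mesh}\wat\Phi$ with the quantity $\scurl_{\wat\mesh}\wat\Phi-\Pi_\mesh\scurl_{\wat\mesh}\wat\Phi$ (the curl being the gradient rotated by $90^\circ$, and $\Pi_\mesh$ commuting with this rotation). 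The last identification is the step requiring the most care, since one must match the scalar/vector conventions of $\nabla_{\wat\mesh}$ versus $\scurl_{\wat\mesh}$ and verify the rotation commutes with $\Pi_\mesh$; this is the main (though minor) technical obstacle. Finally, for the reliability and efficiency conclusion, I would chain: all four estimators are equivalent up to $\ncon_\mesh+\ncon_{\wat\mesh}$ by combining \eqref{eq:est:equivalence:1}, \eqref{eq:est:equivalence:2}, and Lemma~\ref{lem:eta:etatilde}; then invoke Theorem~\ref{thm:eta:eff:rel}, which gives reliability and efficiency of $\eta_\mesh$ up to $\ncon_\mesh+\ncon_{\wat\mesh}$, and transport these to an arbitrary $\tau\in\{\eta_\mesh,\wilde\eta_\mesh,\mu_\mesh,\wilde\mu_\mesh\}$ through the equivalences. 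The whole assembly is routine once the three displayed inequalities are in hand.
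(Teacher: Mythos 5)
Your proposal is correct and follows essentially the same route as the paper: the best-approximation property of $\Pi_\mesh$ for the first bound in \eqref{eq:est:equivalence:1}, the inverse estimate of \cite[Theorem 3.6]{ghs} for the second, estimate \eqref{eq:I:neg:2} of Lemma~\ref{lem:I:neg} (via the curl/rotated-gradient identification) for \eqref{eq:est:equivalence:2}, and then Lemma~\ref{lem:eta:etatilde} together with Theorem~\ref{thm:eta:eff:rel} to transfer reliability and efficiency to all four estimators. The details you flag explicitly --- that $\scurl_{\wat\mesh}\Phi=\scurl_\mesh\Phi$, that the weight $h_\mesh$ versus $h_{\wat\mesh}$ is harmless, and that $\Pi_\mesh$ commutes with the rotation relating $\nabla_{\wat\mesh}$ and $\scurl_{\wat\mesh}$ --- are exactly the points the paper leaves implicit.
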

\begin{proof}
  As in~\cite{fp08}, the first estimate in~\eqref{eq:est:equivalence:1}
  follows from the best approximation property of $\Pi_\mesh$, while the second one follows
  from the inverse inequality~\cite[Theorem 3.6]{ghs}.
  The first estimate in~\eqref{eq:est:equivalence:2} is estimate~\eqref{eq:I:neg:2} from Lemma~\ref{lem:I:neg}.
  Now, since
  \begin{align*}
    \eta_\mesh = \enorm{\wat\Phi-\Phi}_{\wat\mesh} \quad \text{ and } \quad
    \wilde\eta_\mesh = \enorm{\wat\Phi-I_\mesh\wat\Phi}_{\wat\mesh}
  \end{align*}
  are equivalent up to $\ncon_\mesh + \ncon_{\wat\mesh}$ according to Lemma~\ref{lem:eta:etatilde},
  all estimators are equivalent up to $\ncon_\mesh + \ncon_{\wat\mesh}$ as well. As $\eta_\mesh$ is efficient and reliable up to $\ncon_\mesh + \ncon_{\wat\mesh}$ according
  to Theorem~\ref{thm:eta:eff:rel}, this is also true for the three other estimators.
\end{proof}
\subsection{Statement of the adaptive algorithm}\label{section:algorithm}
We now introduce the adaptive algorithm. As error indicators on a mesh $\mesh_\ell$, we use the element-wise quantities
\begin{align*}
  \varrho_\ell(\el)^2 := \norm{h_\ell^{1/2}(1-\Pi_\ell)\scurl_{\wat\mesh_\ell}\wat\Phi_\ell}{\bL_2(\el)}^2
  + \norm{h_\ell\jump{\Phi_\ell}}{H^1(\edges_\ell(\el))}^2
  + \norm{\widehat h_\ell \jump{\wat\Phi_\ell}}{H^1(\wat\edges_\ell(\el))}^2.
\end{align*}
For a subset $\MM_\ell \subset \mesh_\ell$, we write $\varrho_\ell(\MM_\ell)^2 = \sum_{\el\in\MM_\ell}\varrho_\ell(\el)^2$, and
we use the abbreviation $\varrho_\ell := \varrho_\ell(\mesh_\ell)$. Hence,
\begin{align*}
  \varrho_\ell^2 = \wilde\mu_\ell^2 + \rho_\ell^2 + \wat\rho_\ell^2
\end{align*}
is a reliable error estimator according to Lemma~\ref{lem:mu}.
The adaptive algorithm now reads as follows.
\begin{algorithm}\label{algorithm}
Input: Initial mesh $\mesh_0$, parameter $\theta \in (0,1)$, counter $\ell := 0$.
\begin{itemize}
  \item[(i)] Obtain $\widehat\mesh_\ell$ by uniform bisec(3)-refinement of $\mesh_\ell$, see Figure~\ref{fig:nvb}.
  \item[(ii)] Compute solutions $\Phi_\ell$ and $\widehat\Phi_\ell$ of~\eqref{eq:galerkin} with respect to $\mesh_\ell$ and $\widehat\mesh_\ell$.
  \item[(iii)] Compute refinement indicators $\varrho_\ell(\el)$ for all $\el \in \mesh_\ell$.
  \item[(iv)] Choose a set $\MM_\ell\subseteq\mesh_\ell$ with minimal cardinality such that
    \begin{align}\label{eq:doerfler}
      \sum_{\el \in \MM_\ell} \varrho_\ell(\el)^2 \geq \theta\sum_{\el \in \mesh_\ell} \varrho_\ell(\el)^2.
    \end{align}
  \item[(v)] Refine mesh $\mesh_\ell$ according to Algorithm NVB and obtain $\mesh_{\ell+1}$.
  \item[(vi)] Update counter $\ell := \ell+1$ and goto (i).
\end{itemize}
\end{algorithm}
\section{Numerical experiments}\label{section:numerics}
In this section we present numerical experiments for two different problems. The exact solution $\phi$ of the first experiment
will be \textit{smooth} in the sense that uniform and adaptive mesh refinement yield the same rate of convergence.
Still, $\phi$ exhibits singularities which stem from the geometric setting (i.e., polygonal boundary).
As we emphasized in the introduction, it is a peculiarity of Crouzeix-Raviart BEM that uniform mesh refinement is optimal for these kind of singularities.

The second example reports on a case where the right-hand side of our model problem is chosen to be singular, such that, due to the mapping properties of $\hyp$,
the exact solution $\phi$ suffers from low regularity as well. In this case, it will turn out that uniform mesh-refinement is suboptimal while adaptive refinement recovers the optimal rate.

\begin{figure}[t]
  \centering
  \psfrag{1}{}
  \psfrag{2}{}
  \psfrag{3}{}
  \psfrag{4}{}
  \psfrag{5}{}
  \psfrag{6}{}
  \psfrag{7}{}
  \psfrag{8}{}
  \includegraphics[width=6cm]{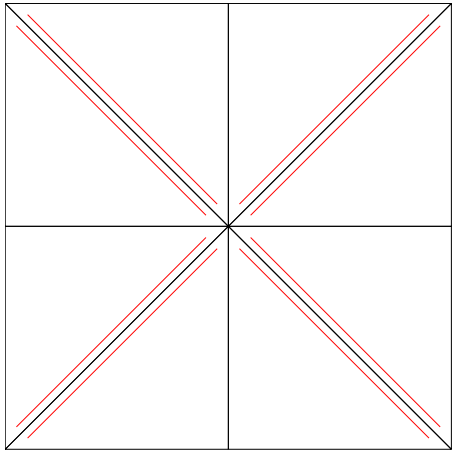}
  \caption{Initial mesh $\mesh_0$ used in the numerical experiments.}
  \label{fig:mesh0}
\end{figure}
\subsection{Experiment with smooth solution}\label{section:experiment1}
We consider the screen $\Gamma := \left[ 0,1 \right]^2$. The initial mesh $\mesh_0$ consists of $8$ congruent triangles, cf. Fig.~\ref{fig:mesh0},
such that $\Gamma$ is halved along the diagonals and the midpoints of its sides. The reference edges are chosen on the two diagonals.
The right-hand side is given by
\begin{align*}
  f(x,y)=1,
\end{align*}
and it is well known that the exact solution $\phi$ has square root edge singularities
\cite{Stephan_87_BIE} so that $\phi\in \wilde H^{1-\varepsilon}(\Gamma)$ for all $\varepsilon>0$.
We use the following five different sequences of meshes.

\textbf{Uniform sequence, Figure~\ref{fig:unif_smooth}.}
The sequence $\mesh_\ell$, $\ell\in\mathbb{N}_0$, is generated by uniform refinement, i.e.,
the initial mesh $\mesh_0$ is chosen as in Figure~\ref{fig:mesh0}, and the mesh
$\mesh_\ell$, $\ell\geq 1$ is generated from $\mesh_{\ell-1}$ by a bisec(3)-refinement
(as described in Figure~\ref{fig:nvb}) of every triangle $\el\in\mesh_\ell$.
Due to the results in~\cite{hs09}, we expect a convergence rate of $\OO(h_\ell^{1/2-\varepsilon}) = \OO(N_\ell^{-1/4+\varepsilon})$ for all $\varepsilon>0$, cf.~\eqref{eq:uniform}.
This is exactly what we observe in the convergence history in Fig.~\ref{fig:unif_smooth}.

\textbf{Adaptive sequence, Figure~\ref{fig:adap_smooth}.}
The sequence of meshes $\mesh_\ell$, $\ell\in\mathbb{N}_0$, is generated by Algorithm~\ref{algorithm} with $\theta=0.5$,
where $\mesh_0$ is chosen as in Figure~\ref{fig:mesh0}.
As we conjectured in Section~\ref{section:uniform}, the rate $\OO(N_\ell^{-1/4})$ cannot be improved in general, and this is what we see in the convergence history in Fig.~\ref{fig:adap_smooth}.
In Fig.~\ref{fig:adaptive_smooth_meshes} we plot the intermediate mesh $\mesh_{11}$ and the final mesh $\mesh_{22}$ that are constructed by the adaptive algorithm. What we observe
qualitatively is that the meshes are refined towards the boundary $\partial\Gamma$, which meets the expectation as $\phi$ exhibits singularities there.
Nevertheless, the computed meshes are not optimal for a conforming method.
This is visualized in Fig.~\ref{fig:adap_smooth}, where we also plot the conforming energy error $\enorm{\phi-\Phi_\ell^\CO}^2$. Clearly, we use the number of the degrees of freedom of the conforming method
for the x-axis.

\textbf{Graded sequence, Figs.~\ref{fig:graded_smooth_beta2} and~\ref{fig:graded_smooth_beta3}.}
We use a sequence of meshes $\mesh_\ell$, $\ell\in\mathbb{N}_0$ that is graded towards $\partial\Gamma$, i.e., for all elements $\el\in\mesh_\ell$ there holds
\begin{align*}
  h_\ell(\el) \simeq \dist(\el,\Gamma)^\beta.
\end{align*}
We select the parameters $\beta\in\left\{ 2,3 \right\}$.
The numerical results show that both gradings maintain the optimal rate for the Crouzeix-Raviart BEM, see Figs.~\ref{fig:graded_smooth_beta2} and ~\ref{fig:graded_smooth_beta3}.

\begin{figure}[t]
\centering
\psfrag{muTilde}[cr][cr]{\tiny $\widetilde\mu_\ell^2$}
\psfrag{eta}[cr][cr]{\tiny$\eta_\ell^2$}
\psfrag{coarse jumps inside}[cr][cr]{\tiny$\rho_\ell^2 + \widehat\rho_\ell^2$}
\psfrag{conforming error}[cr][cr]{\tiny$\enorm{\Phi_\ell - \Phi^0_\ell}^2$}
\psfrag{N^{-1/2}}[cr][cr]{\tiny$N_\ell^{-1/2}$}
\psfrag{N}[cc][cc]{Degrees of freedom}
\includegraphics{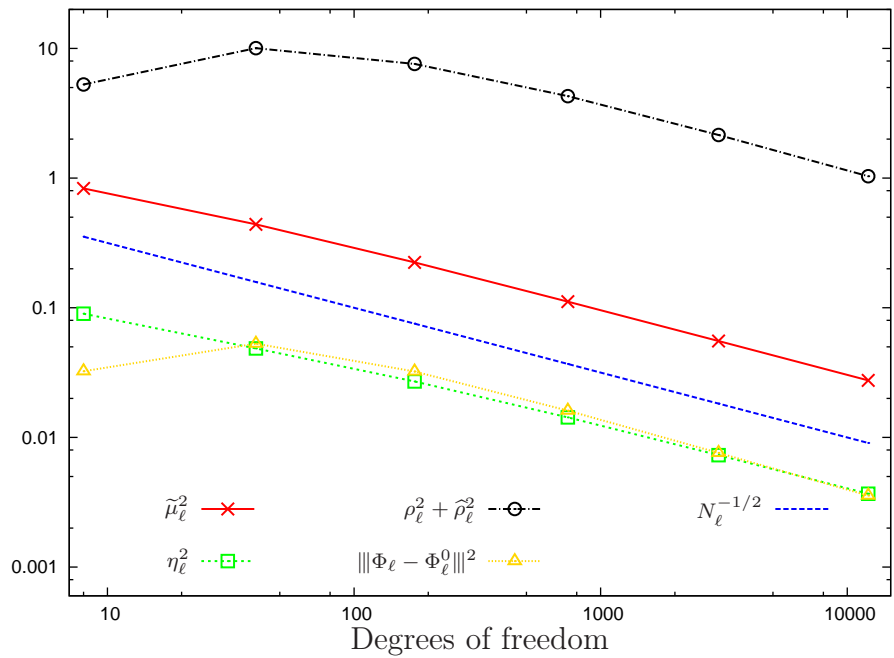}
\caption{Convergence history for uniform mesh refinement and smooth solution. We see that the squared quantities exhibit the optimal rate $\OO(N_\ell^{-1/2})$.}
\label{fig:unif_smooth}
\end{figure}
\begin{figure}[t]
\centering
\psfrag{muTilde}[cr][cr]{\tiny$\widetilde\mu_\ell^2$}
\psfrag{eta}[cr][cr]{\tiny$\eta_\ell^2$}
\psfrag{coarse jumps inside}[cr][cr]{\tiny$\rho_\ell^2 + \widehat\rho_\ell^2$}
\psfrag{conforming error}[cr][cr]{\tiny$\enorm{\Phi_\ell - \Phi^0_\ell}^2$}
\psfrag{nrg err}[cr][cr]{\tiny$\enorm{\phi-\Phi_\ell^\CO}^2$}
\psfrag{N^{-1/2}}[cr][cr]{\tiny$N_\ell^{-1/2}$}
\psfrag{N}[cc][cc]{Degrees of freedom}
\includegraphics{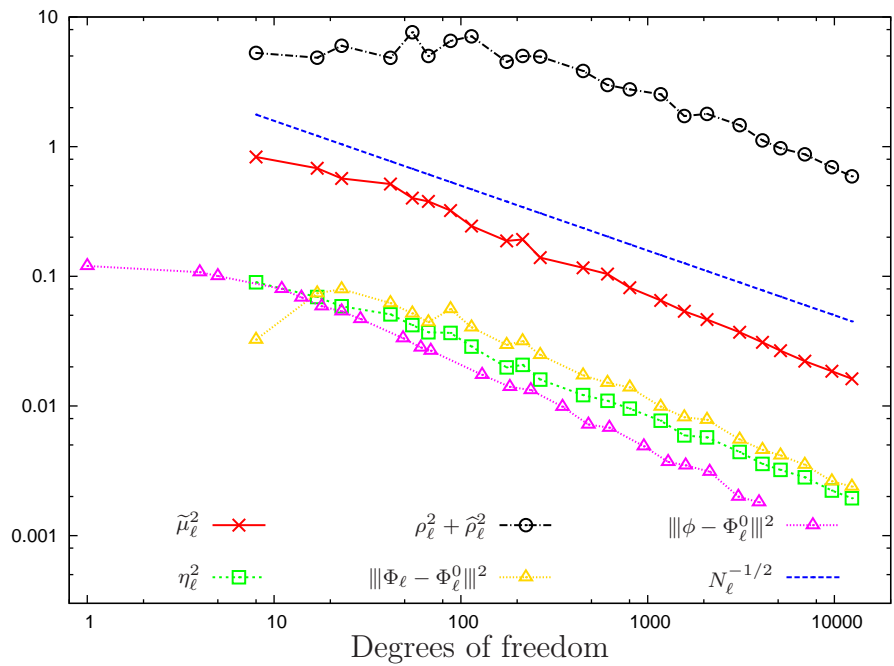}
\caption{Convergence history for adaptive algorithm and smooth solution. We see that the squared quantities exhibit the optimal rate $\OO(N_\ell^{-1/2})$.}
\label{fig:adap_smooth}
\end{figure}
\begin{figure}[t]
\centering
\psfrag{muTilde}[cr][cr]{\tiny $\widetilde\mu_\ell^2$}
\psfrag{eta}[cr][cr]{\tiny$\eta_\ell^2$}
\psfrag{jumps}[cr][cr]{\tiny$\rho_\ell^2 + \widehat\rho_\ell^2$}
\psfrag{conforming error}[cr][cr]{\tiny$\enorm{\Phi_\ell - \Phi^0_\ell}^2$}
\psfrag{N^{-1/2}}[cr][cr]{\tiny$N_\ell^{-1/2}$}
\psfrag{N}[cc][cc]{Degrees of freedom}
\includegraphics{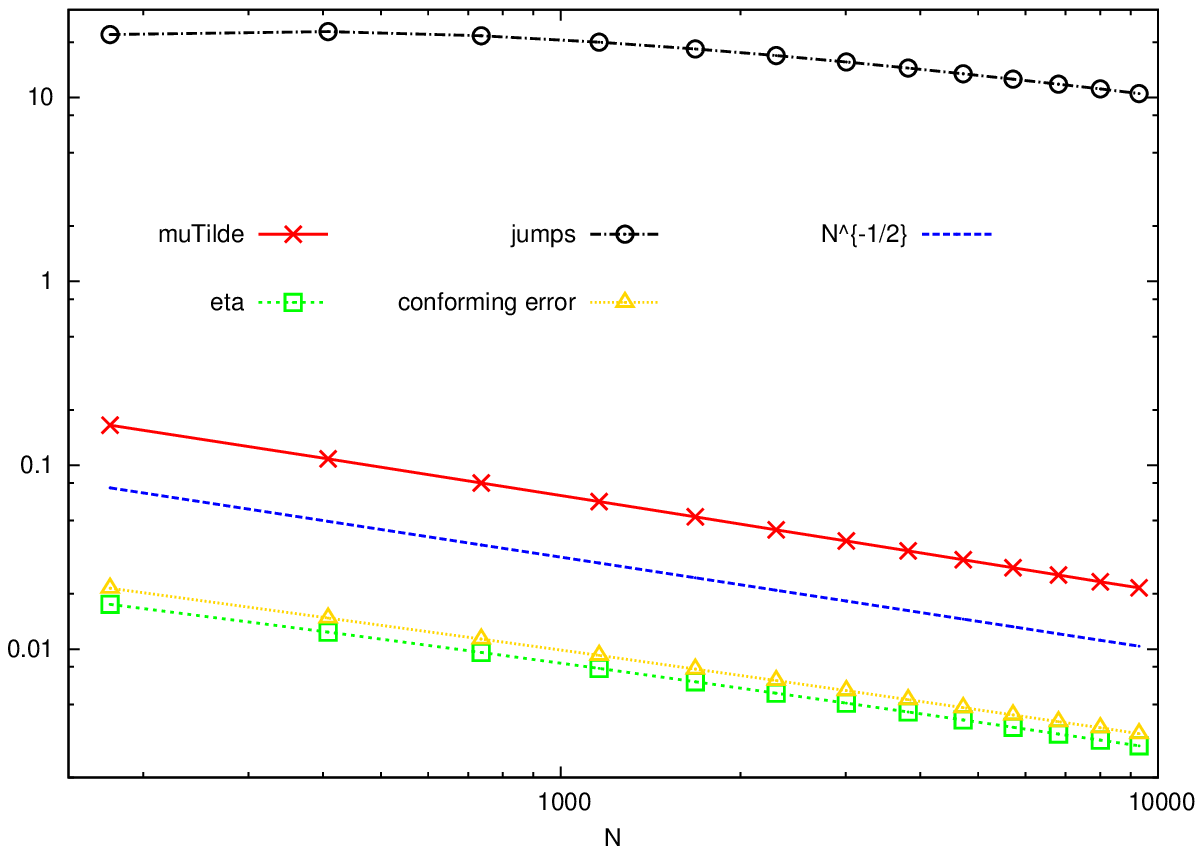}
\caption{Convergence history for graded meshes with $\beta=2$ and smooth solution. We see that the squared quantities exhibit the optimal rate $\OO(N_{\ell}^{-1/2})$.}
\label{fig:graded_smooth_beta2}
\end{figure}
\begin{figure}[t]
\centering
\psfrag{muTilde}[cr][cr]{\tiny$\widetilde\mu_\ell^2$}
\psfrag{eta}[cr][cr]{\tiny$\eta_\ell^2$}
\psfrag{jumps}[cr][cr]{\tiny$\rho_\ell^2 + \widehat\rho_\ell^2$}
\psfrag{conforming error}[cr][cr]{\tiny$\enorm{\Phi_\ell - \Phi^0_\ell}^2$}
\psfrag{N^{-1/2}}[cr][cr]{\tiny$N_\ell^{-1/2}$}
\psfrag{N}[cc][cc]{Degrees of freedom}
\includegraphics{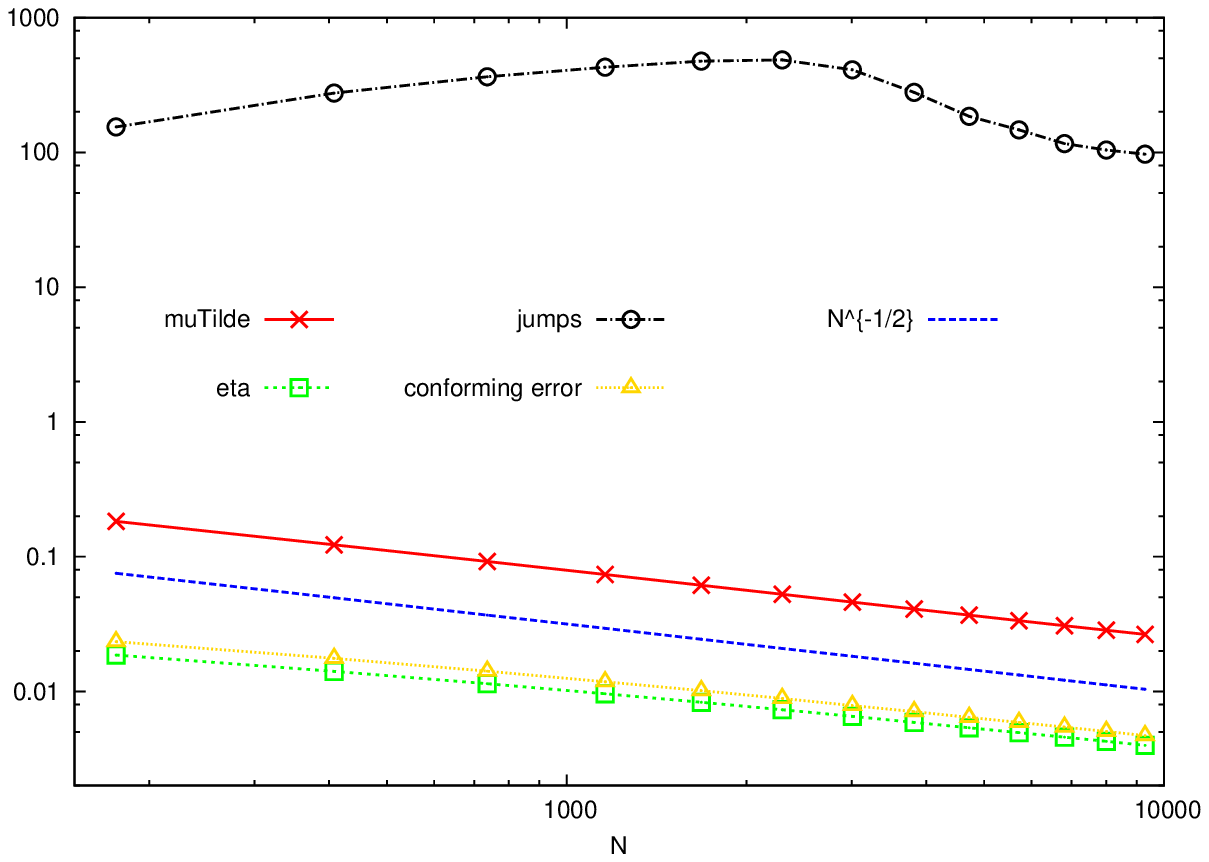}
\caption{Convergence history for graded meshes with $\beta=3$ and smooth solution. We see that the squared quantities exhibit the optimal rate $\OO(N_{\ell}^{-1/2})$.}
\label{fig:graded_smooth_beta3}
\end{figure}
\begin{figure}[t]
  \centering
  \includegraphics[width=6cm]{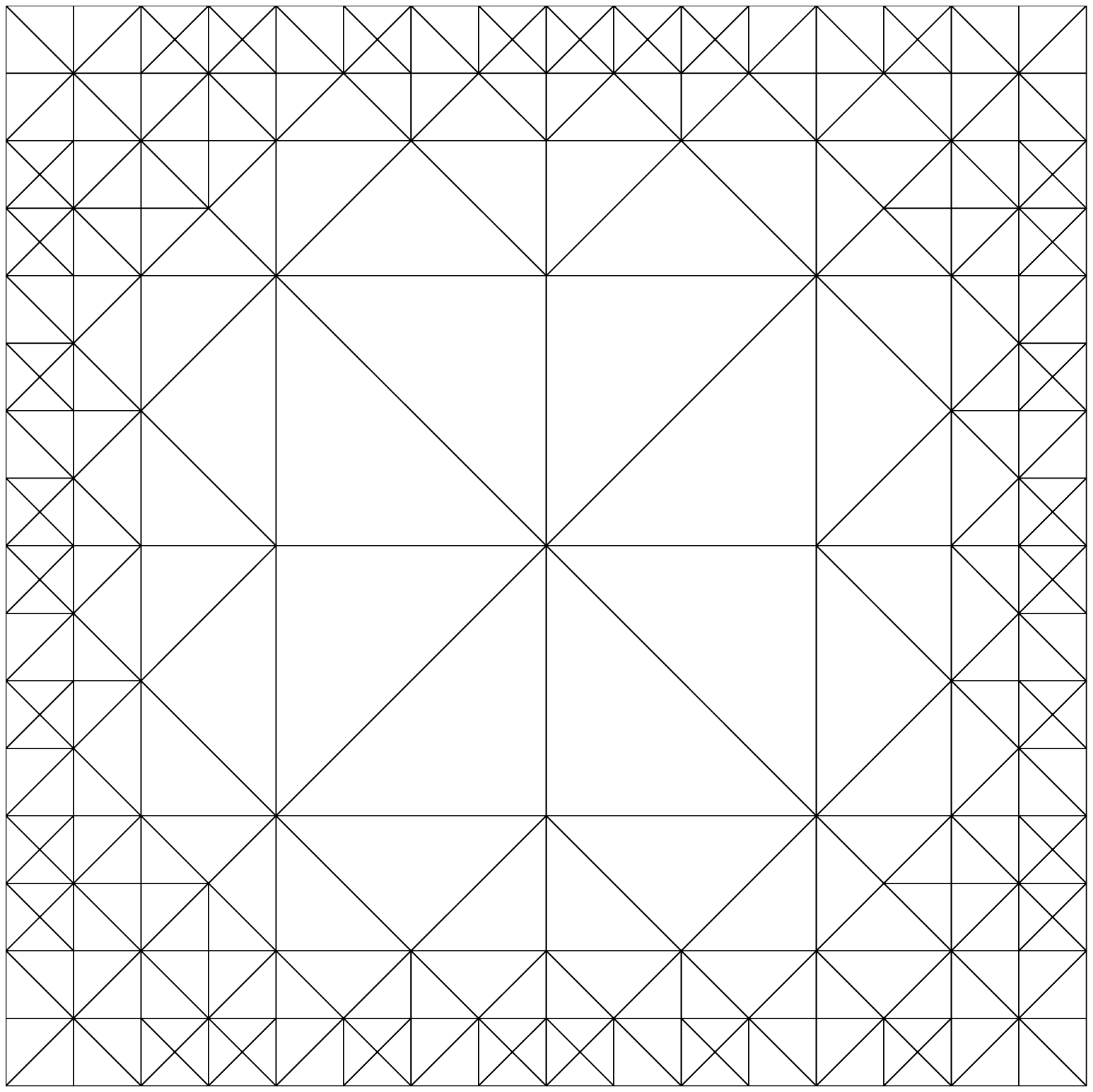}
  \includegraphics[width=6cm]{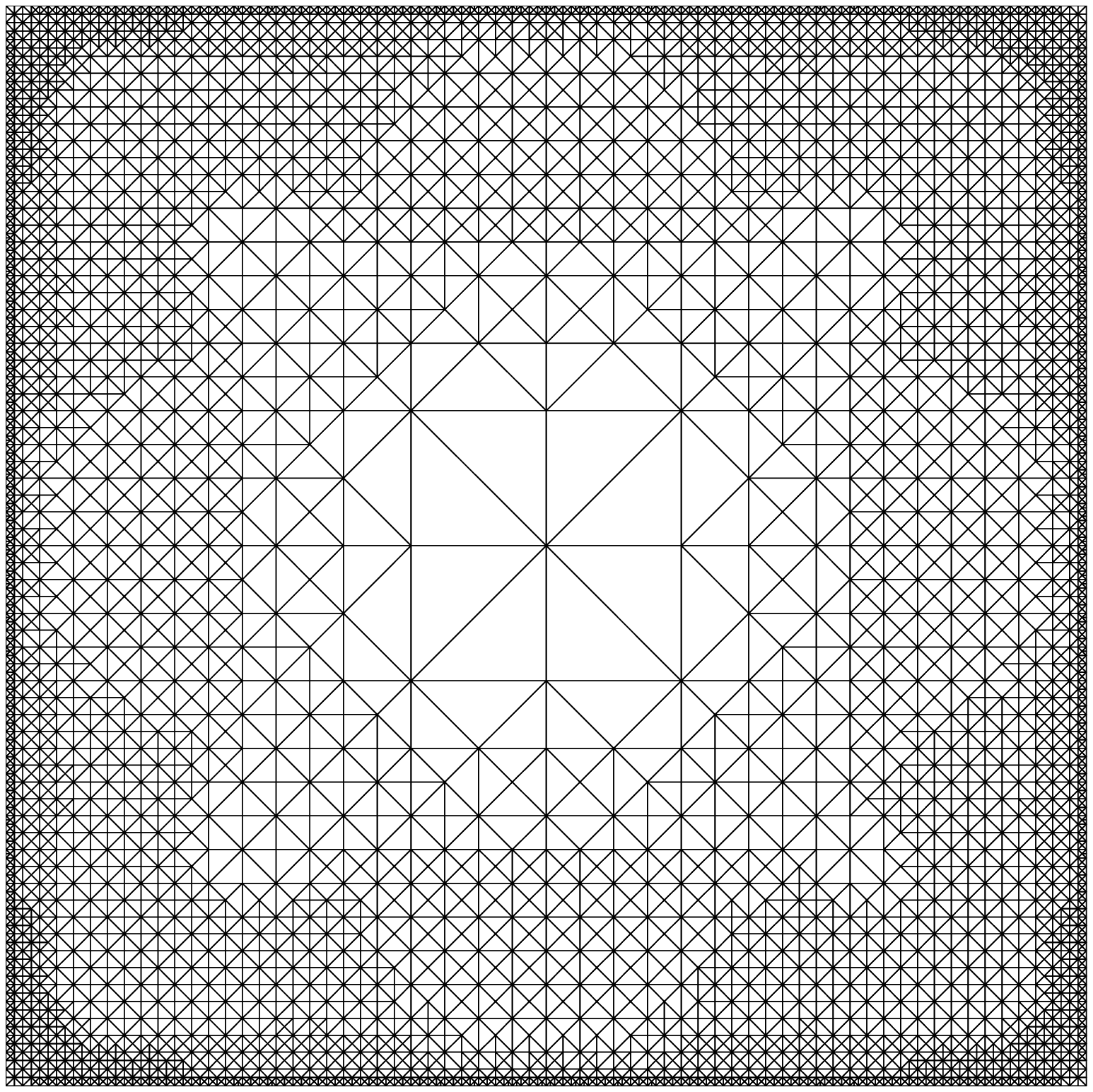}
  \caption{Meshes $\mesh_{11}$ and $\mesh_{22}$ of the adaptive algorithm for smooth solution.}
  \label{fig:adaptive_smooth_meshes}
\end{figure}
\subsection{Experiment with singular solution}
The right-hand side is given by
\begin{align*}
  f(x,y) := x^{-6/10},
\end{align*}
and because of $f\notin L_2(\Gamma)$ we conclude from the mapping properties of $\hyp$ that the exact solution fulfills $\phi\notin H^1(\Gamma)$.
The missing regularity will lead to a suboptimal convergence rate for uniform refinement, which will be recovered by the adaptive algorithm.
Let us briefly discuss what to expect in the uniform case: for the function $g(x)=x^\alpha$ there holds $g\in H^{\alpha+1/2-\varepsilon}(0,1)\setminus H^{\alpha+1/2}(0,1)$ for all $\varepsilon>0$.
We conclude that, $f\in H^{-0.1-\varepsilon}(\Gamma)\setminus H^{-0.1}(\Gamma)$, and due to the mapping properties of $\hyp$ we conclude that $\phi\notin \wilde H^{9/10}(\Gamma)$. Hence, we
expect a convergence rate which is worse than $\OO(h_\ell^{4/10}) = \OO(N_\ell^{-1/5})$ for uniform refinement.
We already stated the choice of the initial mesh $\mesh_0$. Uniform and adaptive meshes are computed exactly as described in Section~\ref{section:experiment1}. The convergence history
for the uniform sequence of meshes is depicted in Fig.~\ref{fig:unif_singular}. We see that the uniform scheme is suboptimal, and the the convergence rate is indeed worse than
$\OO(N_\ell^{-1/5})$ (note that we plot squared quantities). However, the adaptive sequence of meshes, depicted in Fig.~\ref{fig:adap_singular}, recovers the optimal convergence rate.
In Fig.~\ref{fig:adaptive_singular_meshes}, we plot the two adaptive meshes $\mesh_{11}$ and $\mesh_{23}$ which are generated by the adaptive algorithm.

\begin{figure}[ht]
\centering
\psfrag{muTilde}[cr][cr]{\tiny $\widetilde\mu_\ell^2$}
\psfrag{eta}[cr][cr]{\tiny$\eta_\ell^2$}
\psfrag{coarse jumps inside}[cr][cr]{\tiny$\rho_\ell^2 + \widehat\rho_\ell^2$}
\psfrag{conforming error}[cr][cr]{\tiny$\enorm{\Phi_\ell - \Phi^0_\ell}^2$}
\psfrag{N^{-1/2}}[cr][cr]{\tiny$N_\ell^{-1/2}$}
\psfrag{N^{-2/5}}[cr][cr]{\tiny$N_\ell^{-2/5}$}
\psfrag{N}[cc][cc]{Degrees of freedom}
\includegraphics{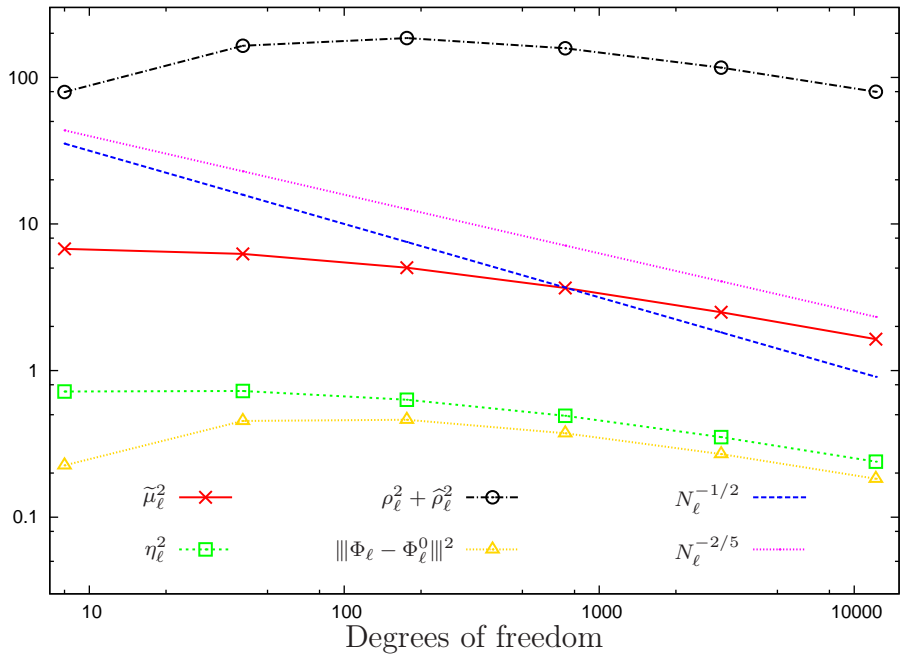}
\caption{Convergence history for uniform mesh refinement and singular right-hand side.
The squared quantities do not exhibit the optimal rate, which would be $\OO(N_{\ell}^{-1/2})$.}
\label{fig:unif_singular}
\end{figure}
\begin{figure}[t]
\centering
\psfrag{muTilde}[cr][cr]{\tiny$\widetilde\mu_\ell^2$}
\psfrag{eta}[cr][cr]{\tiny$\eta_\ell^2$}
\psfrag{coarse jumps inside}[cr][cr]{\tiny$\rho_\ell^2 + \widehat\rho_\ell^2$}
\psfrag{conforming error}[cr][cr]{\tiny$\enorm{\Phi_\ell - \Phi^0_\ell}^2$}
\psfrag{N^{-1/2}}[cr][cr]{\tiny$N_\ell^{-1/2}$}
\psfrag{N}[cc][cc]{Degrees of freedom}
\includegraphics{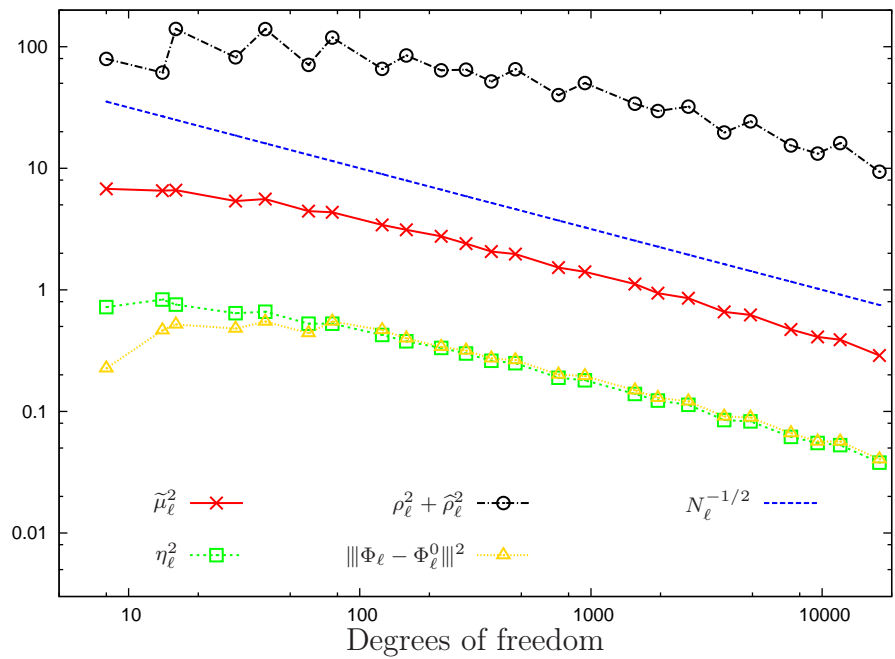}
\caption{Convergence history for adaptive algorithm and singular right-hand side.
The squared quantities exhibit the optimal rate $\OO(N_{\ell}^{-1/2})$.}
\label{fig:adap_singular}
\end{figure}

\begin{figure}[ht]
  \centering
  \includegraphics[width=6cm]{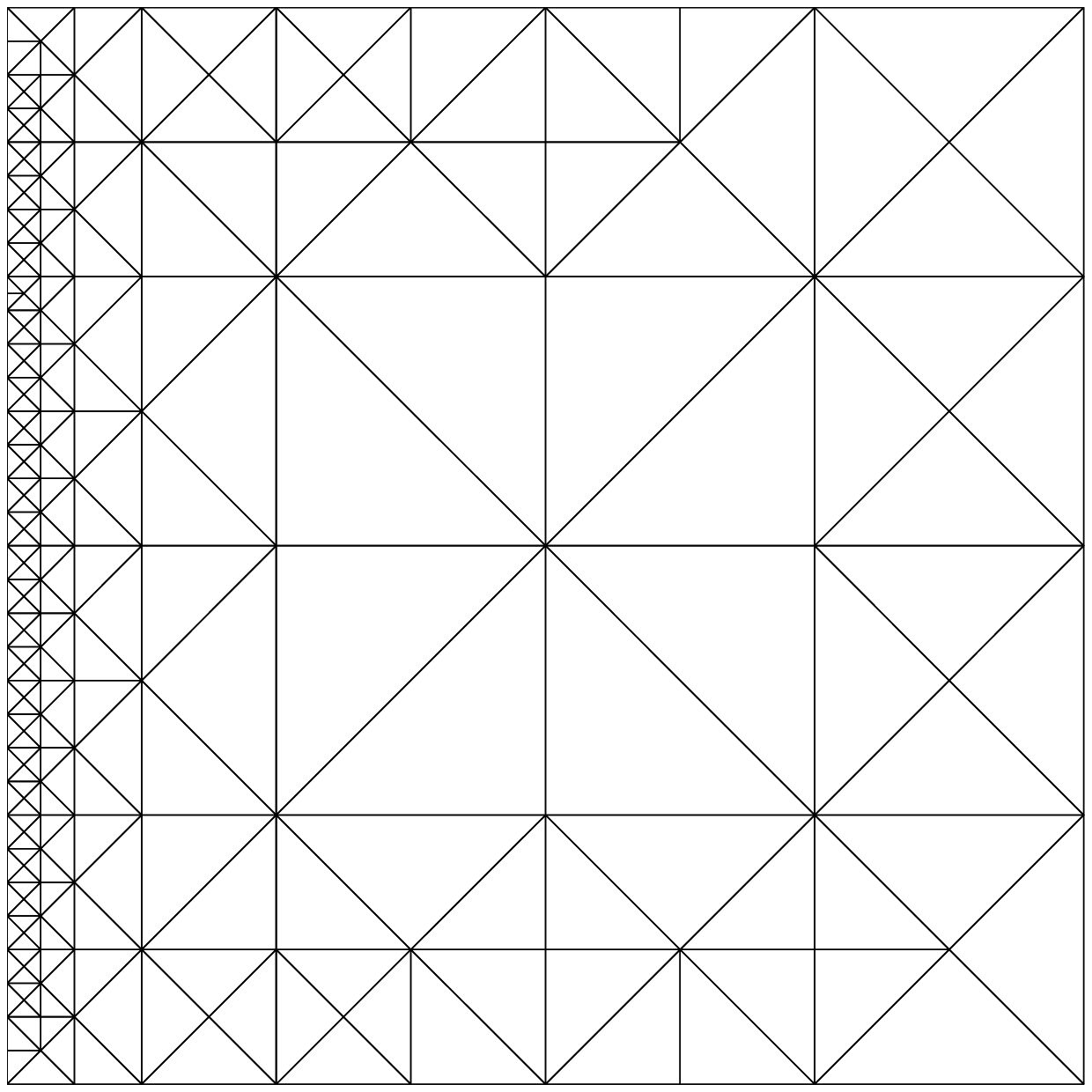}
  \includegraphics[width=6cm]{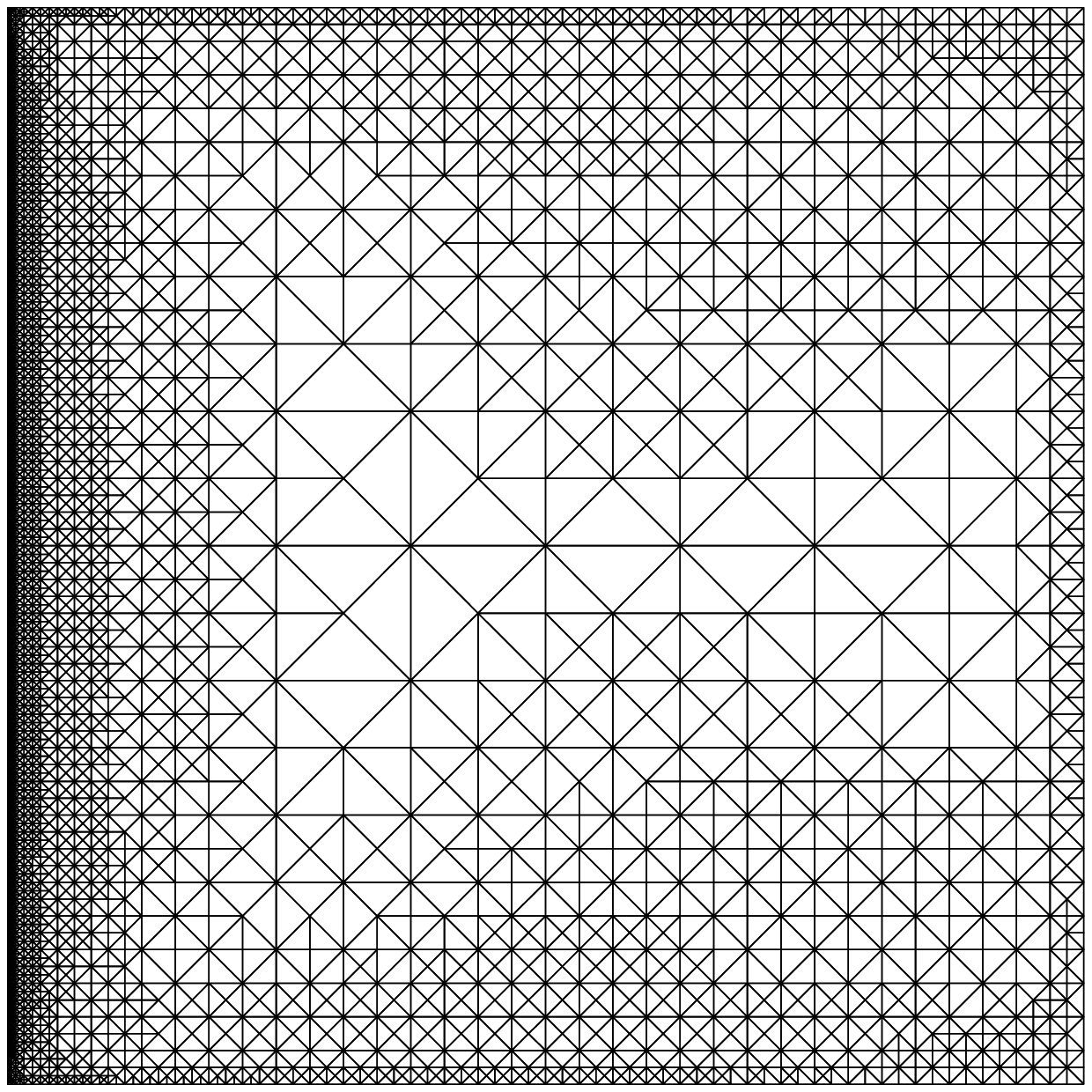}
  \caption{Meshes $\mesh_{11}$ and $\mesh_{23}$ of the adaptive algorithm for singular solution.}
  \label{fig:adaptive_singular_meshes}
\end{figure}

\bibliographystyle{alpha}
\bibliography{literature}
\end{document}